\newtheorem{theorem}{Theorem}
\newtheorem*{example}{Example}
\newtheorem{lemma}{Lemma}
\newtheorem{cl}{Claim}
\newcommand{\w}{\widetilde}
\begin{document}

\title[On 2-local diameter-preserving maps between $C(X)$-spaces]{On 2-local diameter-preserving maps between $C(X)$-spaces}

\author{A. Jim{\'e}nez-Vargas}
\address{Departamento de Matem{\'a}ticas, Universidad de Almer{\'i}a, 04120, Almer{\i}a, Spain}
\email{ajimenez@ual.es}

\author{Fereshteh Sady}
\address{Department of Pure Mathematics, Faculty of Mathematical Sciences, Tarbiat Modares University, Tehran 14115-134, Iran}
\email{sady@modares.ac.ir}

\date{\today}
\subjclass[2010]{46B04, 47B38}
%46B04 Isometric theory of Banach spaces
%47B38 Operators on function spaces (general)
%47B33 Composition operators

\keywords{2-local map; diameter-preserving map; function space; weighted composition operator.}

\begin{abstract}
The 2-locality problem of diameter-preserving maps between $C(X)$-spaces is addressed in this paper. For any compact Hausdorff space $X$ with at least three points, we give an example of a 2-local diameter-preserving map on $C(X)$ which is not linear. However, we show that for first countable compact Hausdorff spaces $X$ and $Y$, 
every 2-local diameter-preserving map from $C(X)$ to $C(Y)$ is linear and surjective up to constants in some sense. This yields the 2-algebraic reflexivity of isometries with respect to the diameter norms on the quotient spaces. 
\end{abstract}
\maketitle

\section{Introduction and results}

Let $E$ and $F$ be Banach spaces and let $\mathcal{S}$ be a subset of $\mathcal{L}(E,F)$, the space of linear operators from $E$ to $F$. Let us recall that a linear map $T\colon E\to F$ is a local $\mathcal{S}$-map if for every $e\in E$, there exists a $T_e\in\mathcal{S}$, depending possibly on $e$, such that $T_e(e)=T(e)$. On the other hand, a map $\Delta\colon E\to F$ (which is not assumed to be linear) is called a 2-local $\mathcal{S}$-map if for any $e,u\in E$, there exists a $T_{e,u}\in\mathcal{S}$, depending in general on $e$ and $u$, such that $T_{e,u}(e)=\Delta(e)$ and $T_{e,u}(u)=\Delta(u)$. 

Most of the published works on local and 2-local $\mathcal{S}$-maps concern the set $\mathcal{S}=\mathcal{G}(E)$, the group of surjective linear isometries of $E$. In this case, the local and 2-local $\mathcal{G}(E)$-maps are known as local and 2-local isometries of $E$, respectively. The main question which one raises is for which Banach spaces, every local isometry is a surjective isometry or, equivalently, which Banach spaces have an algebraically reflexive isometry group. In the 2-local setting, the basic problem is to show that every 2-local isometry is a surjective linear isometry. 

In \cite{Mol-02}, Moln\'{a}r initiated the study of 2-local isometries on operator algebras and proposed to research the 2-locality of isometries on function algebras. In this line, Gy\H{o}ry \cite{Gyo-01} dealed with 2-local isometries on spaces of continuous functions. In \cite{JimVil-11}, Villegas and the first author adapted the Gy\H{o}ry's technique to analyze the 2-local isometries on Lipschitz algebras. Hatori, Miura, Oka and Takagi \cite{HatMiuOkaTak-07} considered 2-local isometries on uniform algebras including certain algebras of holomorphic functions. More recently, Hosseini \cite{Hos-17}, Hatori and Oi \cite{HatOi-18c} and Li, Peralta, L. Wang and Y.-S. Wang \cite{LiPerWanWan-19} have investigated 2-local isometries of different function algebras such as uniform algebras, Lipschitz algebras and algebras of continuously differentiable functions.

Our aim in this paper is to study the 2-locality problem for isometries between certain quotient Banach spaces which appear in a natural form when one treats with maps between $C(X)$-spaces which preserve the diameter of the range.

Let $C(X)$ be the Banach space of all continuous complex-valued functions on a compact Hausdorff space $X$, with the usual supremum norm. A map $\Delta\colon C(X)\to C(Y)$ (not necessarily linear) is diameter-preserving if 
$$
\rho(\Delta(f)-\Delta(g))=\rho(f-g)\qquad (f,g\in C(X)),
$$
where for each $f\in C(X)$,  
$$
\rho(f)=\sup\left\{\left|f(x)-f(z)\right|\colon x,z\in X\right\}.
$$
Gy\H{o}ry and Moln\'{a}r \cite{GyoMol-98} introduced this kind of maps and gave a complete description of diameter-preserving linear bijections of $C(X)$, when $X$ is a first countable compact Hausdorff space. Cabello S\'anchez \cite{Cab-99} and Gonz\'{a}lez and Uspenskij \cite{GonUsp-99} established the same characterization without the first countability assumption. As usual, $\mathbb{T}$ denotes the unit circle of $\mathbb{C}$. We also put 
$$
\mathbb{T}^+=\{e^{it}\colon t\in [0,\pi)\}.  
$$
Moreover, $1_X$ and $0_X$ stand for the constant functions $1$ and $0$ on $X$, respectively. 

\begin{theorem}\label{IsoLipWeaver}\cite{Cab-99, GonUsp-99, GyoMol-98}. 
Let $X$ and $Y$ be compact Hausdorff spaces. A linear bijection $T\colon C(X)\to C(Y)$ is diameter-preserving if and only if there exist a homeomorphism $\phi\colon Y\to X$, a linear functional $\mu\colon C(X)\to\mathbb{C}$ and a number $\lambda\in\mathbb{T}$ with $\lambda\neq-\mu(1_X)$ such that 
$$
T(f)=\lambda f\circ\phi+\mu(f)1_Y\qquad \left(f\in C(X)\right).
$$
\end{theorem}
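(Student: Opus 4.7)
\medskip
\noindent\textbf{Proof proposal.}

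The sufficiency direction is a direct verification. If $T(f)=\lambda f\circ\phi+\mu(f)1_Y$ with $|\lambda|=1$, then for $f,g\in C(X)$,
\[
T(f)-T(g)=\lambda(f-g)\circ\phi+(\mu(f)-\mu(g))1_Y,
\]
and since additive constants contribute nothing to $\rho$ while $\phi$ is a homeomorphism, $\rho(T(f)-T(g))=|\lambda|\,\rho((f-g)\circ\phi)=\rho(f-g)$. For bijectivity, one checks that the map $S(g)=\overline{\lambda}(g\circ\phi^{-1})-\overline{\lambda}(\lambda+\mu(1_X))^{-1}\mu(g\circ\phi^{-1})1_X$ is a two-sided inverse; the hypothesis $\lambda+\mu(1_X)\neq 0$ is exactly what is needed for this formula to make sense (and equivalently for $T(1_X)=(\lambda+\mu(1_X))1_Y$ to be nonzero, so $T$ is injective on constants).

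For the necessity, I would first observe that $\rho(T(1_X))=\rho(1_X)=0$, so $T(1_X)=c\,1_Y$ for some $c\in\mathbb{C}$. Combined with linearity and $\rho(T(f))=\rho(f)$, this means $T$ descends to a well-defined linear surjective isometry $\widetilde{T}\colon (C(X)/\mathbb{C}1_X,\rho)\to(C(Y)/\mathbb{C}1_Y,\rho)$ on the quotient Banach spaces equipped with the diameter norm. The whole problem is therefore reduced to a Banach--Stone type theorem for these quotient spaces.

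The key technical step is to identify the extreme points of the unit ball of the dual of $(C(X)/\mathbb{C}1_X,\rho)$: these should be precisely the functionals induced by $\alpha(\delta_x-\delta_z)$ with $x\neq z$ in $X$ and $\alpha\in\mathbb{T}$, modulo the scalar equivalence $\alpha(\delta_x-\delta_z)\sim -\alpha(\delta_z-\delta_x)$. This can be established by viewing $C(X)/\mathbb{C}1_X$ isometrically as a subspace of $C(X\times X\setminus\Delta)$ via $f\mapsto((x,z)\mapsto f(x)-f(z))$ and applying a Krein--Milman/Arens--Kelley analysis together with an antisymmetry argument. The adjoint $\widetilde{T}^{\,*}$ then sends extreme points to extreme points, so for every pair $y\neq y'$ in $Y$ there exist $x,x'\in X$ with $x\neq x'$ and $\alpha(y,y')\in\mathbb{T}$ such that
\[
T(f)(y)-T(f)(y')=\alpha(y,y')\bigl(f(x)-f(x')\bigr)\qquad(f\in C(X)).
\]

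It remains to show that this pairing is induced by a single homeomorphism $\phi\colon Y\to X$ together with a single unimodular constant $\lambda$. Fixing a base point $y_0\in Y$ with its partner $x_0\in X$, one defines $\phi(y)$ as the partner of $y$ relative to $y_0$, uses continuity of $y\mapsto T(f)(y)$ and the rigidity of the above functional equation to prove that $\phi$ is well-defined, injective, continuous and (using surjectivity of $T$) a homeomorphism, while $\alpha(y,y_0)$ is forced to be a constant $\lambda\in\mathbb{T}$ independent of $y$. Setting $\mu(f):=T(f)(y_0)-\lambda f(\phi(y_0))$ yields a linear functional $\mu$ on $C(X)$ with $T(f)=\lambda f\circ\phi+\mu(f)1_Y$, and injectivity of $T$ on constants forces $\lambda+\mu(1_X)\neq 0$. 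The main obstacle I anticipate is the extreme point identification for $(C(X)/\mathbb{C}1_X,\rho)^{*}$; once that is in hand, the recovery of $\phi$, $\lambda$, $\mu$ proceeds along classical Banach--Stone lines.
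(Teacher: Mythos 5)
The paper does not prove this theorem: it is quoted from the cited sources, so there is no internal proof to compare against. Your route is essentially the one in Cabello S\'anchez's paper \cite{Cab-99} (and close to Rao--Roy), namely passing to the quotient $(C(X)/\mathbb{C}1_X,\rho)$ and running a Banach--Stone argument via extreme points of the dual ball; by contrast, Gy\H{o}ry--Moln\'ar's original argument for first countable $X$ is more hands-on. Your sufficiency direction is complete and correct: the displayed inverse $S$ does invert $T$, and the condition $\lambda\neq-\mu(1_X)$ is indeed exactly what makes it well defined.

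The necessity direction, however, is a roadmap rather than a proof: both load-bearing steps are asserted, not established. First, the identification of the extreme points of the dual ball of $(C(X)/\mathbb{C}1_X,\rho)$ as $\{\alpha(\delta_x-\delta_z)\colon \alpha\in\mathbb{T},\ x\neq z\}$ is the technical heart of \cite{Cab-99}; the Arens--Kelley embedding $f\mapsto\bigl((x,z)\mapsto f(x)-f(z)\bigr)$ only shows that every extreme point is of this form, and proving that each such functional actually \emph{is} extreme requires a separate argument that you do not supply. Second, the passage from the pairwise data $\bigl(y,y'\bigr)\mapsto\bigl(\{x,x'\},\alpha(y,y')\bigr)$ to a single homeomorphism $\phi$ and a single constant $\lambda$ is not a routine ``rigidity'' remark: one must exploit the linearity of $\widetilde{T}^{\,*}$ on identities such as $(\delta_y-\delta_{y'})+(\delta_{y'}-\delta_{y''})=\delta_y-\delta_{y''}$ together with linear independence of point masses to force the coherence of the pairs and the constancy of $\alpha$, and then prove continuity of $\phi$ in both directions (this is precisely the kind of combinatorial work the present paper carries out in its Claims 9--12 in the harder 2-local setting). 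Until those two steps are written out, the necessity half remains a plausible but incomplete sketch.
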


The main problem addressed in the study of diameter-preserving maps between function algebras is establishing a representation of such maps as the sum of a weighted composition operator and a functional as in Theorem \ref{IsoLipWeaver}. We have a precise description of diameter-preserving maps for most of the classical function spaces (see for example \cite{AizRam-07, AizRam-10, AizTam-07, Cab-99, FonHos-17, FonSan-04, RaoRoy-01} for diameter-preserving linear maps and \cite{BarRoy-02, FonHos-19, JamSad-16} for the non-linear case). 

In the case in which $\mathcal{S}$ is the set of all diameter-preserving linear bijections from $C(X)$ to $C(Y)$, we studied in a recent paper \cite{JimSad-20} the local $\mathcal{S}$-maps, referred there to as local diameter-preserving maps. Namely, we proved that in the case where $X$ and $Y$ are first countable, every local diameter-preserving map from $C(X)$ to $C(Y)$ is a diameter-preserving bijection. The first countability on the topological spaces is a mild and appropriate condition when one addresses these problems. For example, 
the isometry group and the automorphism group of $C(X)$ are algebraically reflexive in case $X$ is first countable \cite{MolZal-99}, but that reflexivity fails if $X$ is not (see section 7 in \cite{CabMol-02}).

It is natural to arise the corresponding question in the 2-local context, that is, is every 2-local diameter-preserving map a diameter-preserving linear bijection? Unfortunately or not, the answer is negative as we see next with a counterexample. 

Let us recall that a map $\Delta\colon C(X)\to C(Y)$ (not assumed to be linear) is a 2-local diameter-preserving map if for any $f,g\in C(X)$, there exists a diameter-preserving linear bijection $T_{f,g}$ from $C(X)$ to $C(Y)$ such that $T_{f,g}(f)=\Delta(f)$ and $T_{f,g}(g)=\Delta(g)$.

\begin{example}{\rm 
\textbf{(A 2-local diameter-preserving non-linear map between $C(X)$-spaces).}  
Let $X$ and $Y$ be homeomorphic compact Hausdorff spaces with at least three points. Let $\phi\colon Y\to X$ be a homeomorphism and let $\mu\colon C(X)\to\mathbb{C}$ be a homogeneous non-additive functional such that 
$\mu(1_X)\neq -1$ and $\mu(1_X-f)=\mu(1_X)-\mu(f)$ for all $f\in C(X)$. To give an example of such a functional $\mu$, fix three distinct points $x_1,x_2,x_3\in X$ and define $\mu\colon C(X)\to\mathbb{C}$ by 
$$
\mu(f)=\left\{\begin{array}{lll}
f(x_1)& &\text{ if } f(x_1)=f(x_2) \text{ and } f(x_1)\neq f(x_3),\\
      & &\\
f(x_3)& &\text{otherwise.} 
\end{array}\right.
$$
It is easy to see that $\mu$ is homogeneous and $\mu(1_X-f)=\mu(1_X)-\mu(f)$ for all $f\in C(X)$. Meanwhile, $\mu$ is not additive, since we can take $f,g\in C(X)$ such that $f(x_1)=f(x_2)=1$ and $f(x_3)=0$ and also $g(x_1)=g(x_3)=1$ and $g(x_2)=0$, and then $\mu(f+g)=1\neq 2=\mu(f)+\mu(g)$. 

Define now the map $\Delta\colon C(X)\to C(Y)$ by 
$$
\Delta(f)=f\circ\phi+\mu(f)1_Y \qquad (f\in C(X)).
$$
For each pair $f,g\in C(X)$, consider a linear functional $\mu_{f,g}\colon C(X)\to \mathbb{C}$ satisfying
$$
\mu_{f,g}(f)=\mu(f),\quad \mu_{f,g}(g)=\mu(g),\quad\mu_{f,g}(1_X)=\mu(1_X).
$$ 
Notice that such a functional $\mu_{f,g}$ exists. Indeed, if $\{f,g,1_X\}$ is linearly independent, the existence of $\mu_{f,g}$ can be established by extending linearly to $C(X)$ a convenient linear functional defined on ${\rm span}\{f,g,1_X\}$. 
%namely, the linear functional $$af+bg+c1_X\mapsto a\mu(f)+b\mu(g)+c\mu(1_X)\qquad (a,b,c\in\mathbb{C}),$$
If $\{f,g,1_X\}$ is linearly dependent and $1_X\in {\rm span}\{f,g\}$, then we can find a linear functional $\mu_{f,g}$ on $C(X)$ such that $\mu_{f,g}(f)=\mu(f)$ and $\mu_{f,g}(g)=\mu(g)$ (note that $\mu$ is homogeneous). Since $1_X=af+bg$ for some $a,b\in\mathbb{C}$,  the hypotheses on $\mu$ easily imply that $\mu_{f,g}(1_X)=\mu(1_X)$, as desired. In the case where $\{f,g,1_X\}$ is linearly dependent and $1_X\notin {\rm span}\{f,g\}$ we conclude that $f$ and $g$ are linearly dependent and we may assume that $f=cg$ for some scalar $c$. In this case, there exists a linear functional $\mu_{f,g}$ on $C(X)$ such that $\mu_{f,g}(1_X)=\mu(1_X)$ and $\mu_{f,g}(g)=\mu(g)$. Hence $\mu_{f,g}(f)=\mu(f)$ since $\mu$ is homogeneous. Thus in each case we can find a linear functional $\mu_{f,g}\colon C(X)\to \mathbb{C}$ with the desired properties. 
 
Finally, for any $f,g\in C(X)$, define $T_{f,g}\colon C(X)\to C(Y)$ by 
$$
T_{f,g}(h)=h\circ\phi+ \mu_{f,g}(h)1_Y \qquad (h\in C(X)).
$$
Then $T_{f,g}$ is a diameter-preserving linear bijection by Theorem \ref{IsoLipWeaver}. Clearly, for any $f,g\in C(X)$, we have $T_{f,g}(f)=\Delta(f)$ and $T_{f,g}(g)=\Delta(g)$. Hence $\Delta$ is a 2-local diameter-preserving map which is homogeneous but not additive.  
}
\end{example}
  
However, we shall show here that, in the case where $X$ and $Y$ are first countable, every 2-local diameter-preserving map (which is immediately diameter-preserving) is linear and surjective up to constants in some sense. Our approach consists in analysing the 2-local isometries of the following quotient Banach spaces which appear closely related to diameter-preserving maps.  

Given a compact Hausdorff space $X$, let $C_\rho(X)$ denote the quotient space $C(X)/\ker(\rho)$. Clearly, $C_\rho(X)$ is a Banach space with the norm
$$
\left\|\pi_X(f)\right\|_\rho=\rho(f)\qquad (f\in C(X)),
$$
where $\pi_X\colon C(X)\to C_\rho(X)$ is the canonical quotient surjection. Let us recall that a mapping $T\colon  C_\rho(X)\to C_\rho(Y)$ (not taken linear nor surjective) is an isometry whenever
$$
\left\|T(\pi_X(f))-T(\pi_X(g))\right\|_\rho=\left\|\pi_X(f)-\pi_X(g)\right\|_\rho\qquad (f,g\in C(X)).
$$

Our main result is the following theorem on 2-local isometries between $C_\rho(X)$-spaces.

\begin{theorem}\label{main2}
Let $X$ and $Y$ be first countable compact Hausdorff spaces and let $T\colon C_\rho(X)\to C_\rho(Y)$ be a 2-local isometry. Then $T$ is a surjective linear isometry. 
\end{theorem}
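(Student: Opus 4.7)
The aim is to show that $T$ coincides with a single surjective linear isometry of the form $\pi_X(h) \mapsto \lambda_0\, \pi_Y(h \circ \phi_0)$, by extracting $\lambda_0 \in \mathbb{T}$ and a homeomorphism $\phi_0 : Y \to X$ from the family of 2-local linearisations.

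The elementary consequences of the 2-local hypothesis are immediate. For any $f, g \in C(X)$,
\[
\|T(\pi_X(f)) - T(\pi_X(g))\|_\rho = \|T_{f,g}(\pi_X(f)) - T_{f,g}(\pi_X(g))\|_\rho = \|\pi_X(f) - \pi_X(g)\|_\rho,
\]
so $T$ is a metric isometry. Since $T_{0, g}$ is linear, $T(0) = T_{0, g}(0) = 0$; since $T_{\alpha f, f}$ is linear, $T(\alpha \pi_X(f)) = \alpha T(\pi_X(f))$, so $T$ is homogeneous. Next, passing Theorem \ref{IsoLipWeaver} through the quotient $\pi_Y$ (the additive term $\mu(f) 1_Y$ vanishes modulo constants), every surjective linear isometry $C_\rho(X) \to C_\rho(Y)$ has the form $\pi_X(h) \mapsto \lambda\, \pi_Y(h \circ \phi)$ with $\lambda \in \mathbb{T}$ and $\phi : Y \to X$ a homeomorphism. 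Hence for each pair $(f, g)$ the 2-local hypothesis yields some $\lambda_{f,g} \in \mathbb{T}$ and a homeomorphism $\phi_{f,g} : Y \to X$ with
\[
T(\pi_X(f)) = \lambda_{f,g}\, \pi_Y(f \circ \phi_{f,g}), \qquad T(\pi_X(g)) = \lambda_{f,g}\, \pi_Y(g \circ \phi_{f,g}).
\]

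The core of the proof is to consolidate this $(f, g)$-indexed family into a single pair $(\lambda_0, \phi_0)$. My strategy is to build $\phi_0$ pointwise. For each $y_0 \in Y$, fix a countable decreasing neighbourhood basis $(V_n)$ at $y_0$ (first countability of $Y$) and, using Urysohn's lemma together with first countability of $X$, construct a sequence of peaking test functions in $C(X)$ that both separate points sharply and probe small neighbourhoods of candidate preimages. The 2-local constraints force the resulting points $\phi_{f,g}(y_0) \in X$ to satisfy compatibility identities of the form $\lambda_{f,g}\,\pi_Y(f \circ \phi_{f,g}) = \lambda_{f,h}\,\pi_Y(f \circ \phi_{f,h})$, i.e.\ the corresponding continuous functions differ by a constant on $Y$. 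Using compactness of $X$ together with first countability to extract convergent subsequences, one isolates a well-defined limit point $\phi_0(y_0) \in X$. Doing this for every $y_0$ yields a map $\phi_0 : Y \to X$; continuity of $\phi_0$ is checked by a sequential argument leaning on first countability of $Y$; injectivity follows by a symmetric construction on the other side, and then $\phi_0$ is a homeomorphism as an injective continuous map between compact Hausdorff spaces. The scalar $\lambda_0$ is fixed once and for all from the representation of $T(\pi_X(f_0))$ for a single nontrivial anchor function $f_0$, and 2-locality applied to pairs $(f_0, g)$, combined with the rigidity arranged in the construction of $\phi_0$, then yields $T(\pi_X(g)) = \lambda_0\, \pi_Y(g \circ \phi_0)$ for every $g$.

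The principal obstacle is the non-uniqueness of the representation $T(\pi_X(f)) = \lambda\, \pi_Y(f \circ \phi)$ for an individual $f$: distinct pairs $(\lambda, \phi)$ can represent the same class in $C_\rho(Y)$ when $f$ has symmetries or takes few distinct values, so one cannot simply read off $(\lambda_0, \phi_0)$ from a lucky single choice of $f$. Expressing the goal as $\bigcap_f \mathcal{G}_f \neq \varnothing$ (where $\mathcal{G}_f$ is the set of pairs representing $T(\pi_X(f))$), the intersection must be produced by the sequential limiting construction sketched above, which genuinely exploits the first countability of \emph{both} $X$ and $Y$ and a careful choice of test functions. Once $\phi_0$ and $\lambda_0$ are in hand, the conclusion that $T$ is a surjective linear isometry is immediate.
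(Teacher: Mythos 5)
Your reduction to the form $T(\pi_X(f))=\lambda_{f,g}\,\pi_Y(f\circ\phi_{f,g})$ is correct (it is the content of Lemma \ref{lem1} combined with Theorem \ref{IsoLipWeaver}), and you have correctly located the central difficulty: the pair $(\lambda,\phi)$ representing $T(\pi_X(f))$ for a single $f$ is far from unique, so the family $\{(\lambda_{f,g},\phi_{f,g})\}$ must be consolidated by some intersection argument. But everything after that point is a declaration of intent rather than a proof, and the specific route you sketch has concrete gaps. The pointwise construction of $\phi_0(y_0)$ as a ``limit'' of the points $\phi_{f,g}(y_0)$ is not well posed: the index set is the collection of pairs of functions, not a sequence; you never specify which test functions are used, why the resulting points converge, or why the limit satisfies the compatibility identity for \emph{every} $f$ simultaneously. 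The paper resolves exactly this by working in the opposite direction: for each pair $(x_1,x_2)\in\widetilde{X}$ it forms the closed sets $\mathcal{B}_{(x_1,x_2),f}\subseteq\widetilde{Y}\times\mathbb{T}$ of all $((y_1,y_2),\lambda)$ compatible with $f$, and proves the finite intersection property by showing that for the peaking function $g=h_{(x_1,x_2)}$ (whose existence with $h_{(x_1,x_2)}^{-1}(\{1\})=\{x_1\}$ and $h_{(x_1,x_2)}^{-1}(\{0\})=\{x_2\}$ is precisely where first countability of $X$ enters) the set $\mathcal{B}_{(x_1,x_2),g}$ is a two-point set contained in every $\mathcal{B}_{(x_1,x_2),f}$; compactness then gives a nonempty total intersection. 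No single anchor function $f_0$ can play this role for all pairs of points, so your plan of fixing $\lambda_0$ from one $f_0$ and ``propagating'' to all $g$ does not go through as stated.

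Moreover, your assertion that $\phi_0$ is ``a homeomorphism as an injective continuous map between compact Hausdorff spaces'' is false without surjectivity: such a map is only a homeomorphism onto its image. In the paper this is exactly the hard part: the point map produced by the intersection argument is an injection $\varphi\colon X\to Y$ whose image $Y_0$ need not a priori be all of $Y$, and upgrading the representation $\Delta(f)(y)=\lambda f(\phi_0(y))+\mu(f)$ from $Y_0$ to all of $Y$ occupies Claims \ref{cl16} and \ref{cl17}, via the Kowalski--S{\l}odkowski theorem applied to the functionals $S_y(f)=\lambda^{-1}(\Delta(f)(y)-\mu(f))$ together with a sequential argument that again uses first countability. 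Nothing in your plan addresses this surjectivity/extension issue. In short, the proposal correctly identifies the shape of the argument but omits the two steps that carry all the weight.
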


\section{Proofs}

The first key tool to prove Theorem \ref{main2} is the fact that every isometry $T$ between $C_\rho(X)$-spaces induces a convenient (injective) diameter-preserving map $\Delta$ between the corresponding $C(X)$-spaces which is linear or surjective if so is $T$. Towards this end, fix two points $u_0\in X$ and $w_0\in Y$ and consider the linear bijections 
$$
\Psi_X\colon C(X)\to C_\rho(X)\oplus\mathbb{C}, \quad \Psi_X(f)=(\pi_X(f),f(u_0))\qquad (f\in C(X))
$$
and 
$$
\Psi_Y\colon C(Y)\to C_\rho(Y)\oplus\mathbb{C}, \quad \Psi_Y(g)=(\pi_Y(g),g(w_0))\qquad (g\in C(Y)).
$$

\begin{lemma}\label{lem1}
Let $X$ and $Y$ be compact Hausdorff spaces and let $T\colon C_\rho(X)\to C_\rho(Y)$ be an isometry. Then $\Delta\colon C(X)\to C(Y)$ defined by 
$$
\Delta(f)=\Psi_Y^{-1}(T(\pi_X(f)),f(u_0)) \qquad (f\in C(X))
$$
is an injective diameter-preserving map. Moreover, $T$ is linear (respectively, surjective) if and only if so is $\Delta$. 
\end{lemma}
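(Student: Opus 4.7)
The plan is to work from the defining identity $\Delta(f)=\Psi_Y^{-1}(T(\pi_X(f)),f(u_0))$, which unpacks to the two operating conditions $\pi_Y\circ\Delta=T\circ\pi_X$ and $\Delta(f)(w_0)=f(u_0)$ for every $f\in C(X)$. With these in hand, each of the four claims of the lemma will follow by a short diagram chase, the only recurring subtlety being the use of the normalization at $w_0$ to pin down the constant ambiguity coming from $\ker(\rho)$.

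First I would verify that $\Delta$ is diameter-preserving. Using that $\rho(h)=\|\pi_Y(h)\|_\rho$ and that $T$ is an isometry,
$$
\rho(\Delta(f)-\Delta(g))=\|\pi_Y(\Delta(f))-\pi_Y(\Delta(g))\|_\rho=\|T(\pi_X(f))-T(\pi_X(g))\|_\rho=\|\pi_X(f-g)\|_\rho=\rho(f-g).
$$
Injectivity of $\Delta$ is then essentially automatic: if $\Delta(f)=\Delta(g)$, the first coordinate of $\Psi_Y$ gives $T(\pi_X(f))=T(\pi_X(g))$, so $\pi_X(f)=\pi_X(g)$ by the injectivity inherent in any isometry, hence $f-g$ is a constant, and the second coordinate forces $f(u_0)=g(u_0)$, killing that constant.

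For the linearity equivalence, if $\Delta$ is linear then $T(\pi_X(\alpha f+\beta g))=\pi_Y(\Delta(\alpha f+\beta g))=\alpha T(\pi_X(f))+\beta T(\pi_X(g))$, and since $\pi_X$ is surjective this forces $T$ to be linear. Conversely, if $T$ is linear, applying $\pi_Y$ shows that $\Delta(\alpha f+\beta g)-\alpha\Delta(f)-\beta\Delta(g)$ is a constant function on $Y$, and evaluating at $w_0$ via $\Delta(k)(w_0)=k(u_0)$ shows this constant equals $(\alpha f+\beta g)(u_0)-\alpha f(u_0)-\beta g(u_0)=0$. For the surjectivity equivalence, one direction is trivial: if $\Delta$ is onto then every $\pi_Y(h)\in C_\rho(Y)$ equals $T(\pi_X(f))$ for the $f$ with $\Delta(f)=h$. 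In the reverse direction, given $h\in C(Y)$ and surjective $T$, I would pick $f_0\in C(X)$ with $T(\pi_X(f_0))=\pi_Y(h)$ and replace it by $f:=f_0+(h(w_0)-f_0(u_0))1_X$; since $\pi_X$ kills constants, $\pi_X(f)=\pi_X(f_0)$ and $f(u_0)=h(w_0)$, so $\Delta(f)=h$.

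The statement is essentially formal once the two defining identities are written down; the only place where a little attention is needed is the constant-shift trick used to arrange $f(u_0)=h(w_0)$ in the surjectivity step, together with the parallel evaluation at $w_0$ in the linearity step. I do not anticipate any serious obstacle beyond this careful bookkeeping of constants.
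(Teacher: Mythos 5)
Your proposal is correct and follows essentially the same route as the paper: both reduce everything to the identities $\pi_Y\circ\Delta=T\circ\pi_X$ and $\Delta(f)(w_0)=f(u_0)$, with the only cosmetic difference being that in the surjectivity step you shift the preimage $f$ by a constant where the paper shifts the target $g$. You also spell out the converse directions that the paper dismisses as ``a similar reasoning,'' which is harmless extra detail.
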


\begin{proof}
Given $f,g\in C(X)$, we put $h=\Delta(f)-\Delta(g)$. Then 
\begin{align*}
h &=\Psi_Y^{-1}(T(\pi_X(f)),f(u_0))-\Psi_Y^{-1}(T(\pi_X(g)),g(u_0))\\
  &=\Psi_Y^{-1}(T(\pi_X(f))-T(\pi_X(g)),f(u_0)-g(u_0)).
\end{align*}
Hence  
$$
(\pi_Y(h),h(w_0))=\Psi_Y(h)=(T(\pi_X(f))-T(\pi_X(g)),f(u_0)-g(u_0)),
$$
and consequently $\pi_Y(h)=T(\pi_X(f))-T(\pi_X(g))$. This implies that 
\begin{align*}
\rho(\Delta(f)-\Delta(g))&=\left\|\pi_Y(h)\right\|_\rho=\left\|T(\pi_X(f))-T(\pi_X(g))\right\|_\rho\\
                         &=\left\|\pi_X(f)-\pi_X(g)\right\|_\rho\\
                         &=\rho(f-g),
\end{align*}
that is, $\Delta$ is diameter-preserving. Clearly, $\Delta$ is injective. It is also easy to see that $\Delta$ is linear if so is $T$. Assume now that $T$ is surjective. Then, given $g\in C(Y)$ there exists $f\in C(X)$ such that $T(\pi_X(f))=\pi_Y(g)$. Replacing $g$ by $g+\lambda$ for some $\lambda\in\mathbb{C}$, we can assume that $g(w_0)=f(u_0)$. Hence 
$$
\Delta(f)=\Psi_Y^{-1}(T(\pi_X(f)),f(u_0))=\Psi_Y^{-1}(\pi_Y(g),g(w_0))=g,
$$
which shows that $\Delta$ is surjective, as well. A similar reasoning justifies that if $\Delta$ is linear (respectively, surjective), then so is $T$. 
\end{proof}

Now, we prove our main theorem.

\begin{proof} \textit{(Theorem \ref{main2}).}  
Let $T\colon C_\rho(X)\to C_\rho(Y)$ be a 2-local isometry. The proof will be carried out through a series of claims. The proofs of some of them are similar to those of the corresponding steps in the proof of the main theorem (Theorem 2) of \cite{JimSad-20}. For this reason we shall only include here the proof of those claims whose arguments differ essentially from similar steps  in \cite{JimSad-20}.

\begin{cl}\label{cl1}
The map $\Delta\colon C(X)\to C(Y)$ defined by 
$$
\Delta(f)=\Psi_Y^{-1}(T(\pi_X(f)),f(u_0)) \qquad (f\in C(X))
$$
is a 2-local diameter-preserving map.
\end{cl}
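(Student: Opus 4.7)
The plan is to invoke Lemma \ref{lem1} twice: once (implicitly) to motivate the definition of $\Delta$, and once applied to the local surjective linear isometries provided by the 2-locality hypothesis on $T$, in order to manufacture the required diameter-preserving linear bijections witnessing the 2-locality of $\Delta$.

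Concretely, fix $f,g\in C(X)$. Since $T\colon C_\rho(X)\to C_\rho(Y)$ is a 2-local isometry, there is a surjective linear isometry $T_{f,g}\colon C_\rho(X)\to C_\rho(Y)$ such that
$$
T_{f,g}(\pi_X(f))=T(\pi_X(f)),\qquad T_{f,g}(\pi_X(g))=T(\pi_X(g)).
$$
Apply Lemma \ref{lem1} to $T_{f,g}$ (in place of $T$): the associated map $\Delta_{f,g}\colon C(X)\to C(Y)$ defined by
$$
\Delta_{f,g}(h)=\Psi_Y^{-1}(T_{f,g}(\pi_X(h)),h(u_0))\qquad (h\in C(X))
$$
is an injective diameter-preserving map, and, because $T_{f,g}$ is linear and surjective, the ``moreover'' part of Lemma \ref{lem1} guarantees that $\Delta_{f,g}$ is a diameter-preserving linear bijection from $C(X)$ onto $C(Y)$. (Equivalently, by Theorem \ref{IsoLipWeaver}, $\Delta_{f,g}$ has the canonical form $\lambda\, h\circ\phi+\mu(h)1_Y$.)

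It only remains to check that $\Delta_{f,g}$ agrees with $\Delta$ at both $f$ and $g$, which is immediate from the coincidence $T_{f,g}(\pi_X(f))=T(\pi_X(f))$ and $T_{f,g}(\pi_X(g))=T(\pi_X(g))$ and the definitions:
$$
\Delta_{f,g}(f)=\Psi_Y^{-1}(T_{f,g}(\pi_X(f)),f(u_0))=\Psi_Y^{-1}(T(\pi_X(f)),f(u_0))=\Delta(f),
$$
and similarly $\Delta_{f,g}(g)=\Delta(g)$. Hence $\Delta$ is a 2-local diameter-preserving map.

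There is no real obstacle here; the content of the claim is essentially a functorial transfer of the 2-locality property from $T$ to $\Delta$ via the bijective correspondence between isometries on the quotient spaces and diameter-preserving maps on the ambient $C(X)$-spaces established in Lemma \ref{lem1}. The only point requiring a moment's care is to verify that one may apply the ``moreover'' clause of Lemma \ref{lem1} to the local witness $T_{f,g}$, which is legitimate because that witness is, by definition of a 2-local isometry, itself a surjective linear isometry.
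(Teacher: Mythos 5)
Your proposal is correct and follows exactly the paper's own argument: fix $f,g$, take the surjective linear isometry $T_{f,g}$ witnessing 2-locality of $T$, apply Lemma \ref{lem1} to it to obtain the diameter-preserving linear bijection $\Delta_{f,g}$, and observe that it agrees with $\Delta$ at $f$ and $g$ by construction. Nothing is missing.
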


Let $f,g\in C(X)$. By hypotheses, there exists a surjective linear isometry $T_{f,g}\colon C_\rho(X)\to C_\rho(Y)$ such that $T_{f,g}(\pi_X(f))=T(\pi_X(f))$ and $T_{f,g}(\pi_X(g))=T(\pi_X(g))$. Define $\Delta_{f,g}\colon C(X)\to C(Y)$ by 
$$
\Delta_{f,g}(h)=\Psi_Y^{-1}(T_{f,g}(\pi_X(h)),h(u_0))\qquad (h\in C(X)).
$$
By Lemma \ref{lem1}, $\Delta_{f,g}$ is a diameter-preserving linear bijection from $C(X)$ to $C(Y)$ satisfying  $\Delta_{f,g}(f)=\Delta(f)$ and $\Delta_{f,g}(g)=\Delta(g)$.

\medskip

The following fact will be used repeatedly without any explicit mention in our proof. 

\begin{cl}\label{cl3}
For any $f,g\in C(X)$, there exists a diameter-preserving linear bijection $\Delta_{f,g}$ of $C(X)$ to $C(Y)$ such that $\Delta_{f,g}(f)=\Delta(f)$ and $\Delta_{f,g}(g)=\Delta(g)$. Moreover, there exist a homeomorphism $\phi_{f,g}\colon Y\to X$, a linear functional $\mu_{f,g}$ on $C(X)$ and a number $\lambda_{f,g}\in\mathbb{T}$ with $\lambda_{f,g}\neq -\mu_{f,g}(1_X)$ such that 
$$
\Delta(f)(y)=\lambda_{f,g} f(\phi_{f,g}(y))+\mu_{f,g}(f)\qquad (y\in Y)
$$
and 
$$
\Delta(g)(y)=\lambda_{f,g} g(\phi_{f,g}(y))+\mu_{f,g}(g)\qquad (y\in Y).
$$
%In the case $f=g$, we write simply $\lambda_f$, $\phi_f$ and $\mu_f$ instead of $\lambda_{f,g}$, $\phi_{f,g}$ and $\mu_{f,g}$, respectively. 
\end{cl}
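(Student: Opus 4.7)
The plan is to observe that Claim \ref{cl3} is essentially a direct corollary of Claim \ref{cl1} combined with Theorem \ref{IsoLipWeaver}, with no genuinely new work required beyond unpacking definitions.

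First, by Claim \ref{cl1}, the map $\Delta\colon C(X)\to C(Y)$ is a 2-local diameter-preserving map. Unwinding the definition of 2-local diameter-preserving map stated just before the example in the introduction, this means that for any fixed $f,g\in C(X)$ there exists a diameter-preserving linear bijection $\Delta_{f,g}\colon C(X)\to C(Y)$ with $\Delta_{f,g}(f)=\Delta(f)$ and $\Delta_{f,g}(g)=\Delta(g)$. This already gives the first assertion of the claim.

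Next, I apply Theorem \ref{IsoLipWeaver} to the diameter-preserving linear bijection $\Delta_{f,g}$. That theorem produces a homeomorphism $\phi_{f,g}\colon Y\to X$, a linear functional $\mu_{f,g}\colon C(X)\to\mathbb{C}$, and a scalar $\lambda_{f,g}\in\mathbb{T}$ with $\lambda_{f,g}\neq-\mu_{f,g}(1_X)$ such that
$$
\Delta_{f,g}(h)(y)=\lambda_{f,g}\,h(\phi_{f,g}(y))+\mu_{f,g}(h)\qquad (h\in C(X),\ y\in Y).
$$
Substituting $h=f$ and using $\Delta_{f,g}(f)=\Delta(f)$ yields the first displayed formula of the claim, and substituting $h=g$ and using $\Delta_{f,g}(g)=\Delta(g)$ yields the second.

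There is no substantive obstacle: the proof is a one-line invocation of the definition followed by the standard structure theorem. The only thing to be careful about is simply keeping the notation straight — the $\phi_{f,g}$, $\lambda_{f,g}$, $\mu_{f,g}$ produced here depend on the pair $(f,g)$ because $\Delta_{f,g}$ does, which is precisely why the subscript is maintained for later use in the proof of Theorem \ref{main2}.
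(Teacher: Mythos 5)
Your proposal is correct and matches the paper's own proof, which is exactly the one-line remark ``It follows from Claim \ref{cl1} and Theorem \ref{IsoLipWeaver}''; you have merely spelled out the same two-step deduction. Nothing further is needed.
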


It follows from Claim \ref{cl1} and Theorem \ref{IsoLipWeaver}. 

\begin{cl}\label{cl2}
$\Delta$ is injective, diameter-preserving and homogeneous.
\end{cl}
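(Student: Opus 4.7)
My plan is to leverage Claims \ref{cl1} and \ref{cl3}, which already supply a Gy\H{o}ry--Moln\'ar representation of $\Delta$ on any prescribed pair, together with Lemma \ref{lem1}, which bridges properties of $T$ to properties of $\Delta$.

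First I would observe that every 2-local isometry is automatically a distance-preserving isometry: for $a,b\in C_\rho(X)$, picking the linear isometry $T_{f,g}$ supplied by 2-locality gives $\|T(a)-T(b)\|_\rho=\|T_{f,g}(a)-T_{f,g}(b)\|_\rho=\|a-b\|_\rho$. Lemma \ref{lem1} then yields the injectivity and the diameter-preserving property of $\Delta$ for free. Alternatively, the diameter-preserving property can be read off directly from Claim \ref{cl3}: for arbitrary $f,g$, the linear diameter-preserving bijection $\Delta_{f,g}$ agrees with $\Delta$ at both $f$ and $g$, so
$$\rho(\Delta(f)-\Delta(g))=\rho(\Delta_{f,g}(f)-\Delta_{f,g}(g))=\rho(\Delta_{f,g}(f-g))=\rho(f-g).$$

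For homogeneity, the key move is to feed the specific pair $(f,\alpha f)$ into Claim \ref{cl3}. Writing $\lambda=\lambda_{f,\alpha f}$, $\phi=\phi_{f,\alpha f}$, $\mu=\mu_{f,\alpha f}$, the claim gives simultaneously
$$\Delta(f)(y)=\lambda f(\phi(y))+\mu(f)\quad\text{and}\quad\Delta(\alpha f)(y)=\lambda(\alpha f)(\phi(y))+\mu(\alpha f).$$
Since $\mu$ is linear, the right-hand side of the second equation is $\alpha\bigl[\lambda f(\phi(y))+\mu(f)\bigr]=\alpha\Delta(f)(y)$, so $\Delta(\alpha f)=\alpha\Delta(f)$; the case $\alpha=0$ delivers $\Delta(0)=0$ as a by-product.

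I do not anticipate a serious obstacle in this claim. The crucial leverage is that Claim \ref{cl3} produces \emph{a single} triple $(\lambda,\phi,\mu)$ describing $\Delta$ on both entries of the chosen pair, and homogeneity slips out by matching the pair to the scalar multiplication. The genuinely hard claims---presumably those following Claim \ref{cl2}---will be the ones needing to pin down extra rigidity, such as additivity or independence of $\phi_{f,g}$ from one of its arguments, and these will be where the first countability of $X$ and $Y$ has to be invoked.
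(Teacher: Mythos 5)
Your proposal is correct and follows essentially the same route as the paper: the paper proves injectivity and the diameter-preserving property directly from the witnessing bijections $\Delta_{f,g}$ (your stated alternative), and proves homogeneity by evaluating $\Delta_{f,\lambda f}$ at the pair $(f,\lambda f)$, which is the same idea as your use of the single representing triple from Claim \ref{cl3}. Your extra observation that $T$ is automatically an isometry (so Lemma \ref{lem1} applies) is a harmless, equally valid variant.
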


Let $f,g\in C(X)$. If $\Delta(f)=\Delta(g)$, then $f=g$ by the injectivity of $\Delta_{f,g}$ and therefore $\Delta$ is injective. Clearly, $\Delta$ is diameter-preserving because 
$$
\rho(\Delta(f)-\Delta(g))=\rho(\Delta_{f,g}(f)-\Delta_{f,g}(g))=\rho(f-g).
$$
Finally, given $\lambda\in\mathbb{C}$, we have 
$$
\Delta(\lambda f)=\Delta_{f,\lambda f}(\lambda f)=\lambda \Delta_{f,\lambda f}(f)=\lambda\Delta(f),
$$
and thus $\Delta$ is homogeneous. 

\medskip

By Claim \ref{cl3}, there exists a homeomorphism from $Y$ onto $X$. Hence $Y$ and $X$ have the same cardinality. Since Theorem \ref{main2} is quite easy to verify when $Y$ is a singleton, we suppose from now on that $X$ and $Y$ have at least two points.

\medskip

Given a set $X$ with cardinal number $|X|\geq 2$, we set  
\begin{align*}
\widetilde{X}&=\left\{(x_1,x_2)\in X\times X\colon x_1\neq x_2\right\},\\
X_2&=\left\{\{x_1,x_2\}\colon (x_1,x_2)\in\widetilde{X}\right\}, 
\end{align*}
and we define the natural correspondence  $\Lambda_X \colon  \w{X} \to X_2$ by 
$$
\Lambda_X\left( (x_1,x_2)\right)=\{x_1,x_2\} \qquad \left( (x_1,x_2)\in \w{X}\right).
$$

Given a compact Hausdorff space $X$ and a point $(x_1,x_2)\in\widetilde{X}$, the Urysohn's lemma guarantees the existence of a continuous function $h_{(x_1,x_2)}\colon X\to [0,1]$ such that 
$$
h_{(x_1,x_2)}(x_1)-h_{(x_1,x_2)}(x_2)=\rho(h_{(x_1,x_2)}).
$$
In fact, $h_{(x_1,x_2)}(x_1)=1$ and $h_{(x_1,x_2)}(x_2)=0$. Furthermore, since  $X$ is also first countable, we can take $h_{(x_1,x_2)}$ such that $h_{(x_1,x_2)}^{-1}(\{1\})=\{x_1\}$ and $h_{(x_1,x_2)}^{-1}(\{0\})=\{x_2\}$. In particular,  
$$
\left|h_{(x_1,x_2)}(z)-h_{(x_1,x_2)}(w)\right|<\rho(h_{(x_1,x_2)})
$$
for all $(z,w)\in\widetilde{X}\setminus\{(x_1,x_2),(x_2,x_1)\}$. 

\begin{cl}\label{cl4}
For any $(x_1,x_2)\in\widetilde{X}$, the set 
$$
\mathcal{B}_{(x_1,x_2)}=\bigcap_{f\in C(X)}\mathcal{B}_{(x_1,x_2),f}
$$
is nonempty, where 
$$ 
\mathcal{B}_{(x_1,x_2),f}
=\left\{((y_1,y_2),\lambda)\in\widetilde{Y}\times \mathbb{T}\colon\Delta(f)(y_1)-\Delta(f)(y_2)=\lambda\left(f(x_1)-f(x_2)\right)\right\}\qquad (f\in C(X)).
$$
\end{cl}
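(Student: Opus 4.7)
The plan is to exploit the Urysohn function $h=h_{(x_1,x_2)}$ supplied just before the claim, which thanks to the first countability of $X$ attains its maximum only at $x_1$ and its minimum only at $x_2$. I would apply Claim \ref{cl3} to the pair $\{f,h\}$ for each $f\in C(X)$ and aim to produce a single pair $(w_1,w_2)\in\w Y$ and a single scalar $\alpha\in\mathbb{T}$ that jointly witness membership of $((w_1,w_2),\alpha)$ in $\mathcal{B}_{(x_1,x_2),f}$ for every $f$.

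First, by Claim \ref{cl3},
$$
\Delta(h)(y)=\lambda_{f,h}\,h(\phi_{f,h}(y))+\mu_{f,h}(h),\qquad \Delta(f)(y)=\lambda_{f,h}\,f(\phi_{f,h}(y))+\mu_{f,h}(f).
$$
The crucial observation is that $\Delta(h)$ is intrinsic to $h$ and does not depend on $f$, whereas $\phi_{f,h}$, $\lambda_{f,h}$, $\mu_{f,h}$ do. The sharp extremality of $h$, namely $|h(z)-h(w)|<\rho(h)$ for all $(z,w)\in\w X\setminus\{(x_1,x_2),(x_2,x_1)\}$, transfers through the first displayed identity to the statement that a pair $(y_1,y_2)\in\w Y$ satisfies $|\Delta(h)(y_1)-\Delta(h)(y_2)|=\rho(\Delta(h))$ if and only if $\{y_1,y_2\}=\phi_{f,h}^{-1}(\{x_1,x_2\})$. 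Since the left-hand condition depends only on $\Delta(h)$, the unordered pair $\{w_1,w_2\}:=\phi_{f,h}^{-1}(\{x_1,x_2\})$ is forced to coincide for every $f$.

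Fixing any ordering $(w_1,w_2)$ of this two-point set, I would then put $\alpha=\Delta(h)(w_1)-\Delta(h)(w_2)\in\mathbb{T}$ and split into two sub-cases according to whether $\phi_{f,h}(w_1)=x_1$ or $\phi_{f,h}(w_1)=x_2$. Evaluating the first displayed identity at $w_1,w_2$ pins down $\lambda_{f,h}=\alpha$ in the first case and $\lambda_{f,h}=-\alpha$ in the second; substituting into the second displayed identity then gives in both cases
$$
\Delta(f)(w_1)-\Delta(f)(w_2)=\lambda_{f,h}\bigl(f(\phi_{f,h}(w_1))-f(\phi_{f,h}(w_2))\bigr)=\alpha\bigl(f(x_1)-f(x_2)\bigr),
$$
which places $((w_1,w_2),\alpha)$ in $\mathcal{B}_{(x_1,x_2),f}$ for every $f$.

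The main obstacle I anticipate is the orientation ambiguity: a priori the relative labeling of $(w_1,w_2)$ versus $(x_1,x_2)$ under $\phi_{f,h}$, as well as the sign of $\lambda_{f,h}$, could flip with $f$. What makes the argument go through is the exact cancellation between the $\pm$ in $\lambda_{f,h}$ and the swap of the two coordinates of $\phi_{f,h}^{-1}(\{x_1,x_2\})$, which is precisely the content of the case distinction above.
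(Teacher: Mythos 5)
Your proof is correct and rests on the same key idea as the paper's own argument: apply Claim \ref{cl3} to the pair $\{f,h_{(x_1,x_2)}\}$, use the strict extremality of $h_{(x_1,x_2)}$ (available by first countability) to force $\phi_{f,h}$ to carry the witness pair onto $\{x_1,x_2\}$, and let the sign of $\lambda_{f,h}$ cancel against the possible swap of coordinates. The only difference is organizational: the paper establishes the containment $\mathcal{B}_{(x_1,x_2),h_{(x_1,x_2)}}\subseteq\bigcap_{i}\mathcal{B}_{(x_1,x_2),f_i}$ and then invokes closedness, compactness and the finite intersection property, whereas you exhibit a single common witness $((w_1,w_2),\alpha)$ of the full intersection directly, which is a clean shortcut that makes the topological machinery unnecessary.
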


Let $(x_1,x_2)\in\widetilde{X}$. Given $f\in C(X)$, the set $\mathcal{B}_{(x_1,x_2),f}$ is a nonempty subset of $\widetilde{Y}\times \mathbb{T}$. Indeed,  it suffices to choose $y_1,y_2\in Y$ such that $\phi_{f,f}(y_i)=x_i$ for $i=1,2$. Then 
$$
\Delta(f)(y_1)-\Delta(f)(y_2)=\Delta_{f,f}(f)(y_1)-\Delta_{f,f}(f)(y_2)= \lambda_{f,f}\left(f(x_1)-f(x_2)\right),
$$
that is, $((y_1,y_2),\lambda_{f,f})\in \mathcal{B}_{(x_1,x_2),f}$. An easy verification shows that $\mathcal{B}_{(x_1,x_2),f}$ is also closed in $\widetilde{Y}\times \mathbb{T}$.  

%in a similar way to that of Step 3 in \cite{JimSad-20}. 

%Fix $f\in C(X)$ and take $y_1,y_2\in Y$ such that $\phi_f(y_1)=x_1$ and $\phi_f(y_2)=x_2$. Clearly, $y_1\neq y_2$. We have   
%$$
%\Delta(f)(y_1)-\Delta(f)(y_2)
%&=\Delta_f(f)(y_1)-\Delta_f(f)(y_2)\\
%=\lambda_f\left(f(\phi_f(y_1))-f(\phi_f(y_2))\right)
%=\lambda_f\left(f(x_1)-f(x_2)\right),
%$$
%and thus $((y_1,y_2),\lambda_f)\in\mathcal{B}_{(x_1,x_2),f}$. Therefore $\mathcal{B}_{(x_1,x_2),f}$ is nonempty, and to prove that it is closed in $\widetilde{Y}\times\mathbb{T}$, assume that $\{((y_i,z_i),\lambda_i)\}_{i\in I}$ is a net in $\mathcal{B}_{(x_1,x_2),f}$ converging to $((y_1,y_2),\lambda)$ in $\widetilde{Y}\times\mathbb{T}$ equipped with the product topology. We have 
%$$
%\Delta(f)(y_i)-\Delta(f)(z_i)=\lambda_i\left(f(x_1)-f(x_2)\right)
%$$
%for all $i\in I$. Since $T(f)\in C(Y)$, we infer that 
%$$
%\Delta(f)(y_1)-\Delta(f)(y_2)=\lambda\left(f(x_1)-f(x_2)\right),
%$$
%and thus $((y_1,y_2),\lambda)\in\mathcal{B}_{(x_1,x_2),f}$. A similar reasoning shows that $\mathcal{B}_{(x_1,x_2),f}$ is a nonempty closed subset of $Y^2\times \mathbb{T}$.

We next prove that the family $\left\{\mathcal{B}_{(x_1,x_2),f}\colon f\in C(X)\right\}$ has the finite intersection property. Let $n\in\mathbb{N}$ and $f_1,\ldots,f_n\in C(X)$. Take the function $g=h_{(x_1,x_2)}$. For each $i\in\{1,\ldots,n\}$, there exists a diameter-preserving linear bijection $\Delta_{g,f_i}$ from $C(X)$ to $C(Y)$ such that $\Delta_{g,f_i}(g)=\Delta(g)$ and $\Delta_{g,f_i}(f_i)=\Delta(f_i)$. Furthermore, we have a homeomorphism $\phi_{g,f_i}$ from $Y$ onto $X$, a linear functional $\mu_{g,f_i}$ on $C(X)$ and a number $\lambda_{g,f_i}\in\mathbb{T}$ with $\lambda_{g,f_i}\neq -\mu_{g,f_i}(1_X)$ such that 
$$
\Delta_{g,f_i}(h)(y)=\lambda_{g,f_i}h(\phi_{g,f_i}(y))+\mu_{g,f_i}(h)\qquad (h\in C(X),\; y\in Y).
$$
Let $((y_1,y_2),\lambda)\in\mathcal{B}_{(x_1,x_2),g}$ be arbitrary. For each $i\in\{1,\ldots,n\}$, we obtain 
\begin{align*}
\lambda\left(g(x_1)-g(x_2)\right)
&=\Delta(g)(y_1)-\Delta(g)(y_2)\\
&=\Delta_{g,f_i}(g)(y_1)-\Delta_{g,f_i}(g)(y_2)\\
&=\lambda_{g,f_i}\left(g(\phi_{g,f_i}(y_1))-g(\phi_{g,f_i}(y_2))\right)
\end{align*}
and therefore
$$
\left|g(\phi_{g,f_i}(y_1))-g(\phi_{g,f_i}(y_2))\right|=1.
$$
This implies that either 
$$
(\phi_{g,f_i}(y_1),\phi_{g,f_i}(y_2))=(x_1,x_2),
$$
or 
$$
(\phi_{g,f_i}(y_1),\phi_{g,f_i}(y_2))=(x_2,x_1).
$$ 
Hence $\lambda_{g,f_i}=\lambda$ in the first case, or $\lambda_{g,f_i}=-\lambda$ in the second one. We deduce that $\mathcal{B}_{(x_1,x_2),g}$ is contained in the set
$$ 
\left\{((\phi_{g,f_i}^{-1}(x_1),\phi_{g,f_i}^{-1}(x_2)),\lambda_{g,f_i}),
((\phi_{g,f_i}^{-1}(x_2),\phi_{g,f_i}^{-1}(x_1)),-\lambda_{g,f_i})\right\}.
$$
Now, for any $i\in\{1,\ldots,n\}$, we have 
\begin{align*}
\Delta(f_i)(y_1)-\Delta(f_i)(y_2)&=\Delta_{g,f_i}(f_i)(y_1)-\Delta_{g,f_i}(f_i)(y_2)\\
                                 &=\lambda_{g,f_i}\left(f_i(\phi_{g,f_i}(y_1))-f_i(\phi_{g,f_i}(y_2))\right)\\
                                 &=\lambda\left(f_i(x_1)-f_i(x_2)\right),
\end{align*}
whence $((y_1,y_2),\lambda)\in\mathcal{B}_{(x_1,x_2),f_i}$ and thus 
$$
\emptyset\neq \mathcal{B}_{(x_1,x_2),g}\subseteq \bigcap_{i=1}^n\mathcal{B}_{(x_1,x_2),f_i}.
$$
This proves that $\left\{\mathcal{B}_{(x_1,x_2),f}\colon f\in C(X)\right\}$ has the finite intersection property, and since $\mathcal{B}_{(x_1,x_2),g}$ is a compact subset of $\widetilde{Y}\times \mathbb{T}$, then $\mathcal{B}_{(x_1,x_2)}$ will be nonempty.  

\medskip

%The proofs of Claims \ref{cl5}, \ref{cl6}, \ref{cl7} and \ref{cl8} below are similar, respectively, to those of Steps 4, 5, 7 and 8 of \cite{JimSad-20}, but we include some of them here to be safe.
The proof of Claim \ref{cl5} is similar to Step 4 of  \cite{JimSad-20}.  

\begin{cl}\label{cl5} 
For every $(x_1,x_2)\in\widetilde{X}$, there exist $(y_1,y_2)\in\widetilde{Y}$ and $\lambda\in\mathbb{T}$ such that 
$$
\mathcal{B}_{(x_1,x_2)}=\left\{((y_1,y_2),\lambda),((y_2,y_1),-\lambda)\right\}.
$$
\end{cl}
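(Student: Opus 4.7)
The plan is to establish both inclusions in the claimed equality. The inclusion $\supseteq$ is essentially free: if $((y_1,y_2),\lambda)$ satisfies the defining identity of $\mathcal{B}_{(x_1,x_2),f}$ for every $f\in C(X)$, then swapping $y_1$ and $y_2$ negates the left-hand side, so $((y_2,y_1),-\lambda)\in\mathcal{B}_{(x_1,x_2)}$ as well. By Claim \ref{cl4} the set is nonempty, so I fix some $((y_1,y_2),\lambda)$ inside it, and the task reduces to showing that every other element lies in the orbit of this pair under the involution.

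The decisive tool for the reverse inclusion is the sharpened Urysohn function $g=h_{(x_1,x_2)}$ constructed just before Claim \ref{cl4}. Thanks to first countability it attains its maximum $1$ only at $x_1$ and its minimum $0$ only at $x_2$, so for any $(z,w)\in\widetilde{X}$ the equality $|g(z)-g(w)|=\rho(g)=1$ forces $\{z,w\}=\{x_1,x_2\}$. The strategy is to use $g$ as a test function to pin down any candidate pair $\{y'_1,y'_2\}$.

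Concretely, let $((y'_1,y'_2),\lambda')\in\mathcal{B}_{(x_1,x_2)}$ be arbitrary. Apply Claim \ref{cl3} to the pair $(g,g)$ to obtain a homeomorphism $\phi_{g,g}\colon Y\to X$ and $\lambda_{g,g}\in\mathbb{T}$ with
$$
\Delta(g)(y)=\lambda_{g,g}\,g(\phi_{g,g}(y))+\mu_{g,g}(g)\qquad(y\in Y).
$$
Evaluating the defining identity of $\mathcal{B}_{(x_1,x_2)}$ at $f=g$ yields $\Delta(g)(y'_1)-\Delta(g)(y'_2)=\lambda'$, hence $|g(\phi_{g,g}(y'_1))-g(\phi_{g,g}(y'_2))|=1$, which by the separating property above forces $\{\phi_{g,g}(y'_1),\phi_{g,g}(y'_2)\}=\{x_1,x_2\}$. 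The identical computation applied to $(y_1,y_2)$ gives $\{\phi_{g,g}(y_1),\phi_{g,g}(y_2)\}=\{x_1,x_2\}$, and injectivity of $\phi_{g,g}$ forces $\{y'_1,y'_2\}=\phi_{g,g}^{-1}(\{x_1,x_2\})=\{y_1,y_2\}$. Evaluating the defining identity on $g$ then distinguishes the two orderings: if $(y'_1,y'_2)=(y_1,y_2)$ one reads off $\lambda'=\lambda$, and if $(y'_1,y'_2)=(y_2,y_1)$ one gets $\lambda'=-\lambda$.

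The main obstacle is arranging the separation argument so that first countability does real work. Without the extra property $g^{-1}(\{1\})=\{x_1\}$, $g^{-1}(\{0\})=\{x_2\}$, the equality $|g(z)-g(w)|=1$ need not pin down $\{z,w\}$, and the identification of $\{y'_1,y'_2\}$ with $\{y_1,y_2\}$ would fail. Once that sharpened Urysohn function is in hand, however, the pair is uniquely determined as $\phi_{g,g}^{-1}(\{x_1,x_2\})$, and the remainder of the argument is routine.
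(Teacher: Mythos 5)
Your proof is correct and follows essentially the same route the paper intends: the paper omits the argument (deferring to Step 4 of \cite{JimSad-20}), but its proof of Claim \ref{cl4} already records the key containment $\mathcal{B}_{(x_1,x_2)}\subseteq\mathcal{B}_{(x_1,x_2),g}\subseteq\left\{((\phi_{g,g}^{-1}(x_1),\phi_{g,g}^{-1}(x_2)),\lambda_{g,g}),((\phi_{g,g}^{-1}(x_2),\phi_{g,g}^{-1}(x_1)),-\lambda_{g,g})\right\}$ for $g=h_{(x_1,x_2)}$, which is exactly your use of the sharpened Urysohn function to pin down the pair. Combining this two-element bound with nonemptiness (Claim \ref{cl4}) and closure under the involution $((y_1,y_2),\lambda)\mapsto((y_2,y_1),-\lambda)$, as you do, completes the claim.
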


%Let $(x_1,x_2)\in\widetilde{X}$. By Claim \ref{cl4}, take $((y_1,y_2),\lambda)\in\mathcal{B}_{(x_1,x_2)}$. Clearly, $((y_2,y_1),-\lambda)\in\mathcal{B}_{(x_1,x_2)}$. Let $((z_1,z_2),\beta)\in\mathcal{B}_{(x_1,x_2)}$ be arbitrary. We have
%\begin{align*}
%\Delta(f)(y_1)-\Delta(f)(y_2)&=\lambda\left(f(x_1)-f(x_2)\right),\\
%\Delta(f)(z_1)-\Delta(f)(z_2)&=\beta\left(f(x_1)-f(x_2)\right),
%\end{align*}
%for all $f\in C(X)$. Using Claim \ref{cl3}, we obtain 
%\begin{align*}
%\lambda_f\left(f(\phi_f(y_1))-f(\phi_f(y_2))\right)&=\lambda\left(f(x_1)-f(x_2)\right),\\
%\lambda_f\left(f(\phi_f(z_1))-f(\phi_f(z_2))\right)&=\beta\left(f(x_1)-f(x_2)\right)
%\end{align*}
%for all $f\in C(X)$. In particular, taking $f=h_{(x_1,x_2)}$, the equality   
%$$
%\left|f(\phi_f(y_1))-f(\phi_f(y_2))\right|=\left|f(\phi_f(z_1))-f(\phi_f(z_2))\right|=1
%$$
%implies that   
%$$
%\left\{(\phi_f(y_1),\phi_f(y_2)),(\phi_f(z_1),\phi_f(z_2))\right\}\subseteq\left\{(x_1,x_2),(x_2,x_1)\right\}.
%$$
%We have four possibilities:
%\begin{enumerate}
%\item $x_1=\phi_f(y_1),\; x_2=\phi_f(y_2),\; x_1=\phi_f(z_1),\; x_2=\phi_f(z_2)$.
%\item $x_1=\phi_f(y_1),\; x_2=\phi_f(y_2),\; x_1=\phi_f(z_2),\; x_2=\phi_f(z_1)$.
%\item $x_1=\phi_f(y_2),\; x_2=\phi_f(y_1),\; x_1=\phi_f(z_2),\; x_2=\phi_f(z_1)$.
%\item $x_1=\phi_f(y_2),\; x_2=\phi_f(y_1),\; x_1=\phi_f(z_1),\; x_2=\phi_f(z_2)$.
%\end{enumerate}
%Using the injectivity of $\phi_f$, we deduce that 
%$$
%((z_1,z_2),\beta)\in\left\{((y_1,y_2),\lambda),((y_2,y_1),-\lambda)\right\}.
%$$
%Therefore 
%$$
%\mathcal{B}_{(x_1,x_2)}=\left\{((y_1,y_2),\lambda),((y_2,y_1),-\lambda)\right\}.
%$$
It is immediate from Claim \ref{cl5} that for every $(x_1,x_2)\in \w{X}$, the set 
$$
\mathcal{A}_{(x_1,x_2)}=\left\{(y_1,y_2)\in \w{Y} \,|\,  \exists \lambda\in \mathbb{T}^+ \colon ((y_1,y_2),\lambda)\in\mathcal{B}_{(x_1,x_2)}\right\}
$$
is a singleton. Let $\Gamma\colon \w{X}\to \w{Y}$ be the map given by $\Gamma((x_1,x_2))=(y_1,y_2)$ where for each  $(x_1,x_2)\in \w{X}$, the element $(y_1,y_2)\in \w{Y}$ is the unique point of $A_{(x_1,x_2)}$. We note that if  $\mathcal{A}_{(x_1,x_2)}=\{(y_1,y_2)\}$, then the definition of $\mathcal{A}_{(x_1,x_2)}$ shows that there exists a (unique) scalar $\beta(x_1,x_2)\in \mathbb{T}^+$, depending on the pair $(x_1,x_2)$,  such that 
$$
\Delta(f)(y_1)-\Delta(f)(y_2)=\beta(x_1,x_2) \left(f(x_1)-f(x_2)\right) \qquad (f\in C(X)). 
$$ 
This concludes that 
$$
\Delta(f)(y_2)-\Delta(f)(y_1)=\beta(x_1,x_2) \left(f(x_2)-f(x_1)\right) \qquad (f\in C(X)),
$$
that is $\beta(x_2,x_1)=\beta(x_1,x_2)$ and  $\Gamma\left( (x_2,x_1)\right)=(y_2,y_1)$.     
\begin{cl}\label{cl8}
The map $\Gamma$ is a bijection from $\w{X}$ to $\cup_{(x_1,x_2)\in \w{X}}\mathcal{A}_{(x_1,x_2)}$.
\end{cl}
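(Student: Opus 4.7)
The plan is to check the two standard properties separately, with injectivity being the substantive part. Surjectivity is essentially by definition: every element $(y_1,y_2)$ of the codomain lies in $\mathcal{A}_{(x_1,x_2)}$ for some $(x_1,x_2)\in\widetilde{X}$, and since that set equals $\{\Gamma((x_1,x_2))\}$ we immediately get $(y_1,y_2)$ in the image.

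For injectivity, I would suppose $\Gamma((x_1,x_2))=\Gamma((x'_1,x'_2))=(y_1,y_2)$ and use the characterizing identity associated with $\mathcal{A}$: for every $f\in C(X)$,
$$
\beta(x_1,x_2)\bigl(f(x_1)-f(x_2)\bigr)=\Delta(f)(y_1)-\Delta(f)(y_2)=\beta(x'_1,x'_2)\bigl(f(x'_1)-f(x'_2)\bigr),
$$
with both scalars in $\mathbb{T}^+$, hence nonzero. The strategy is then to show that this forces $\{x_1,x_2\}=\{x'_1,x'_2\}$ as unordered pairs, and then to rule out the swapped case using the already-observed equivariance $\Gamma((x_2,x_1))=(y_2,y_1)$.

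To force the pairs to coincide as sets, I would argue by contradiction: if they differ, some element of one pair, say $x_1$, does not appear in $\{x'_1,x'_2\}$. By the Urysohn-type construction already employed for $h_{(x_1,x_2)}$ (available because $X$ is compact Hausdorff and first countable), pick $f\in C(X)$ with $f(x_1)=1$ and $f(x_2)=f(x'_1)=f(x'_2)=0$. Substituting into the displayed identity yields $\beta(x_1,x_2)=0$, contradicting $\beta(x_1,x_2)\in\mathbb{T}^+$. Hence $\{x_1,x_2\}=\{x'_1,x'_2\}$. If the pairs were not equal as ordered pairs, we would have $(x'_1,x'_2)=(x_2,x_1)$, whence $\Gamma((x'_1,x'_2))=(y_2,y_1)\ne(y_1,y_2)$ since $y_1\ne y_2$, again a contradiction.

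I do not anticipate a serious obstacle; the only subtle point is the separation step, and the slight care needed there is just to choose $f$ vanishing at all relevant points other than $x_1$, which is handled uniformly by Urysohn's lemma as used earlier in the paper.
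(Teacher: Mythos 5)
Your proof is correct and follows essentially the same route as the paper's: both establish surjectivity directly from the definition of $\mathcal{A}_{(x_1,x_2)}$ and derive injectivity from the identity $\beta(x_1,x_2)\left(f(x_1)-f(x_2)\right)=\beta(x_3,x_4)\left(f(x_3)-f(x_4)\right)$ by plugging in a separating function. The only (harmless) differences are that the paper substitutes the already-constructed peaking function $h_{(x_1,x_2)}$ and excludes the swapped ordered pair by noting that $\mathbb{T}^+$ cannot contain both a scalar and its negative, whereas you use an ad hoc Urysohn function and the equivariance $\Gamma\left((x_2,x_1)\right)=(y_2,y_1)$.
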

The surjectivity of $\Gamma$ is immediate, since $(y_1,y_2)=\Gamma\left((x_1,x_2)\right)$ if and only if $(y_1,y_2)\in \mathcal{A}_{(x_1,x_2)}$. 
To prove its injectivity, let $(x_1,x_2),(x_3,x_4)\in \w{X}$ be such that
$$
(y_1,y_2)=\Gamma\left( (x_1,x_2) \right)=\Gamma\left( (x_3,x_4)\right).
$$
Then  we have 
$$
\beta(x_1,x_2)\left(f(x_1)-f(x_2)\right)=\Delta(f)(y_1)-\Delta(f)(y_2)=\beta(x_3,x_4)\left(f(x_3)-f(x_4)\right)                                             
$$
for all $f\in C(X)$, where $\beta(x_1,x_2),\beta(x_3,x_4)\in\mathbb{T}^+$. Substituting $f$ by $h_{(x_1,x_2)}$, 
we deduce that $\{x_3,x_4\}=\{x_1,x_2\}$. Now since both scalars $\beta(x_1,x_2)$ and $\beta(x_3,x_4)$ are in $\mathbb{T}^+$, we get $(x_3,x_4)=(x_1,x_2)$, as desired. 

\begin{cl}\label{cl9}
For any $\{x_1,x_2\},\{x_3,x_4\}\in X_2$, we have  
$$
\left|\{x_1,x_2\}\cap\{x_3,x_4\}\right|=\left|\Lambda_Y\left( \Gamma\left( (x_1,x_2) \right) \right)\cap\Lambda_Y\left( \Gamma\left( (x_3,x_4) \right) \right)\right|.
$$
\end{cl}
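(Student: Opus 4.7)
The plan is to derive Claim \ref{cl9} by forcing a \emph{single} diameter-preserving linear bijection to simultaneously describe the action of $\Gamma$ on both input pairs, so that the associated homeomorphism $\phi\colon Y\to X$ matches $\{y_1,y_2\}$ with $\{x_1,x_2\}$ and $\{y_3,y_4\}$ with $\{x_3,x_4\}$. Fix $(x_1,x_2),(x_3,x_4)\in\w{X}$ representing the two prescribed elements of $X_2$, and set $g_1=h_{(x_1,x_2)}$ and $g_2=h_{(x_3,x_4)}$. By Claim \ref{cl3} applied to the pair $g_1,g_2$, there exist a homeomorphism $\phi\colon Y\to X$, a linear functional $\mu$ on $C(X)$, and a number $\lambda\in\mathbb{T}$ such that
$$
\Delta(g_i)(y)=\lambda\, g_i(\phi(y))+\mu(g_i)\qquad (i=1,2,\; y\in Y).
$$

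Set $(y_1,y_2)=\Gamma((x_1,x_2))$ and $(y_3,y_4)=\Gamma((x_3,x_4))$. By definition of $\Gamma$ and the scalar $\beta$ introduced after Claim \ref{cl5},
$$
\beta(x_1,x_2)\bigl(g_1(x_1)-g_1(x_2)\bigr)=\Delta(g_1)(y_1)-\Delta(g_1)(y_2)=\lambda\bigl(g_1(\phi(y_1))-g_1(\phi(y_2))\bigr),
$$
and since the leftmost expression has modulus $1$, so does $g_1(\phi(y_1))-g_1(\phi(y_2))$. Recalling that $g_1=h_{(x_1,x_2)}\colon X\to[0,1]$ satisfies $g_1^{-1}(\{1\})=\{x_1\}$ and $g_1^{-1}(\{0\})=\{x_2\}$ (this is precisely where the first countability of $X$ is used), we conclude $\{\phi(y_1),\phi(y_2)\}=\{x_1,x_2\}$. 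Performing the identical computation with $g_2$ and the \emph{same} homeomorphism $\phi$ yields $\{\phi(y_3),\phi(y_4)\}=\{x_3,x_4\}$.

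As $\phi\colon Y\to X$ is a bijection, it preserves cardinalities of intersections, hence
$$
\left|\Lambda_Y\bigl(\Gamma((x_1,x_2))\bigr)\cap\Lambda_Y\bigl(\Gamma((x_3,x_4))\bigr)\right|=\left|\phi(\{y_1,y_2\})\cap\phi(\{y_3,y_4\})\right|=\left|\{x_1,x_2\}\cap\{x_3,x_4\}\right|,
$$
which is exactly the desired equality. The only mildly subtle point is to insist from the outset on the \emph{common} 2-local approximation $\Delta_{g_1,g_2}$ rather than two separate ones $\Delta_{g_1,g_1}$ and $\Delta_{g_2,g_2}$: otherwise one would obtain two a priori different homeomorphisms of $Y$ onto $X$, and the comparison of $\{y_1,y_2\}$ with $\{y_3,y_4\}$ inside $Y$ could not be carried out. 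Beyond this observation, the argument uses only Claim \ref{cl3} and the uniqueness properties of the Urysohn functions $h_{(x_i,x_j)}$ guaranteed by first countability.
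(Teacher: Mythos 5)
Your proof is correct and follows essentially the same route as the paper: both apply Claim \ref{cl3} to the single pair of Urysohn functions $h_{(x_1,x_2)}$, $h_{(x_3,x_4)}$ to obtain one common homeomorphism, then use the peaking properties of those functions to identify $\{\phi(y_1),\phi(y_2)\}=\{x_1,x_2\}$ and $\{\phi(y_3),\phi(y_4)\}=\{x_3,x_4\}$. Your closing observation that a bijection preserves intersection cardinalities merely streamlines the paper's explicit four-case enumeration.
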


Let $\{x_1,x_2\},\{x_3,x_4\}\in X_2$. If $\{x_1,x_2\}=\{x_3,x_4\}$, then either  $\Gamma\left((x_1,x_2)\right)=\Gamma\left((x_3,x_4) \right)$ or $\Gamma\left((x_1,x_2)\right)=\Gamma\left((x_4,x_3) \right)$ and thus the equality holds. 
Assume that $\{x_1,x_2\}\neq\{x_3,x_4\}$. Then $(x_1,x_2)\neq (x_3,x_4)$ and $(x_1,x_2)\neq (x_4,x_3)$. Hence 
$\Gamma\left( (x_1,x_2) \right)=(y_1,y_2)$ and $\Gamma\left( (x_3,x_4) \right)=(y_3,y_4)$ for some $\{y_1,y_2\},\{y_3,y_4\}\in Y_2$ with $\{y_1,y_2\}\neq\{y_3,y_4\}$ by the injectivity of $\Gamma$ and the fact that $\Lambda_Y \left(\Gamma\left((x_1,x_2)\right)\right)=\Lambda_Y\left(\Gamma\left( (x_2,x_1) \right) \right)$. We have two equations:
\begin{align*}
\Delta(f)(y_1)-\Delta(f)(y_2)&=\beta(x_1,x_2)\left(f(x_1)-f(x_2)\right),\\
\Delta(f)(y_3)-\Delta(f)(y_4)&=\beta(x_3,x_4)\left(f(x_3)-f(x_4)\right),
\end{align*}
for all $f\in C(X)$, where $\beta(x_1,x_2),\beta(x_3,x_4)\in\mathbb{T}^+$. Put $g=h_{(x_1,x_2)}$ and $h=h_{(x_3,x_4)}$. Then using the first equality for $g$ and the second one for $h$, we obtain
\begin{align*}
\Delta(g)(y_1)-\Delta(g)(y_2)&=\beta(x_1,x_2)\left(g(x_1)-g(x_2)\right),\\
\Delta(h)(y_3)-\Delta(h)(y_4)&=\beta(x_3,x_4)\left(h(x_3)-h(x_4)\right).
\end{align*}
By Claim \ref{cl3}, 
%we can take a diameter-preserving linear bijection $\Delta_{g,h}$ from $C(X)$ to $C(Y)$ for which $\Delta(g)=\Delta_{g,h}(g)$ and $\Delta(h)=\Delta_{g,h}(h)$. By Theorem \ref{IsoLipWeaver}, 
there exist a homeomorphism $\phi_{g,h}$ from $Y$ onto $X$, a linear functional $\mu_{g,h}$ on $C(X)$ and a number $\lambda_{g,h}\in \mathbb{T}$ with $\lambda_{g,h}\neq -\mu_{g,h}(1_X)$ such that
$$
\Delta(g)(y)=\lambda_{g,h} g(\phi_{g,h}(y))+\mu_{g,h}(g)
$$
and 
$$
\Delta(h)(y)=\lambda_{g,h} h(\phi_{g,h}(y))+\mu_{g,h}(h)
$$
for all $y\in Y$, and therefore  
\begin{align*}
\Delta(g)(y_1)-\Delta(g)(y_2)&=\lambda_{g,h}\left(g(\phi_{g,h}(y_1))-g(\phi_{g,h}(y_2))\right),\\
\Delta(h)(y_3)-\Delta(h)(y_4)&=\lambda_{g,h}\left(h(\phi_{g,h}(y_3))-h(\phi_{g,h}(y_4))\right).
\end{align*}
It follows that
\begin{align*}
\lambda_{g,h}\left(g(\phi_{g,h}(y_1))-g(\phi_{g,h}(y_2))\right)&=\beta(x_1,x_2)\left(g(x_1)-g(x_2)\right),\\
\lambda_{g,h}\left(h(\phi_{g,h}(y_3))-h(\phi_{g,h}(y_4))\right)&=\beta(x_3,x_4)\left(h(x_3)-h(x_4)\right).
\end{align*}
These equalities imply that 
$$
(\phi_{g,h}(y_1),\phi_{g,h}(y_2)))\in\left\{(x_1,x_2),(x_2,x_1)\right\}
$$
and
$$
(\phi_{g,h}(y_3),\phi_{g,h}(y_4))\in\left\{(x_3,x_4),(x_4,x_3)\right\}.
$$
Then we have four possibilities:
\begin{enumerate}
\item $x_1=\phi_{g,h}(y_1),\; x_2=\phi_{g,h}(y_2),\; x_3=\phi_{g,h}(y_3),\;x_4=\phi_{g,h}(y_4)$.
\item $x_1=\phi_{g,h}(y_1),\; x_2=\phi_{g,h}(y_2),\; x_3=\phi_{g,h}(y_4),\;x_4=\phi_{g,h}(y_3)$.
\item $x_1=\phi_{g,h}(y_2),\; x_2=\phi_{g,h}(y_1),\; x_3=\phi_{g,h}(y_3),\;x_4=\phi_{g,h}(y_4)$.
\item $x_1=\phi_{g,h}(y_2),\; x_2=\phi_{g,h}(y_1),\; x_3=\phi_{g,h}(y_4),\;x_4=\phi_{g,h}(y_3)$.
\end{enumerate}
If $\left|\{x_1,x_2\}\cap\{x_3,x_4\}\right|=1$, we infer from injectivity of $\phi_{g,h}$ that 
$$
\left|\Lambda_Y\left( \Gamma\left( (x_1,x_2) \right) \right)\cap\Lambda_Y\left( \Gamma\left( (x_3,x_4) \right) \right) \right|=\left|\{y_1,y_2\}\cap\{y_3,y_4\}\right|=1,
$$
while if $\left|\{x_1,x_2\}\cap\{x_3,x_4\}\right|=0$, then 
$$
\left|\Lambda_Y \left( \Gamma\left( (x_1,x_2) \right) \right)\cap\Gamma 
\left( (x_3,x_4) \right) \right|=\left|\{y_1,y_2\}\cap\{y_3,y_4\}\right|=0.
$$

\medskip

%The same proof as in local case concludes that there exist a nonempty closed subset $Y_0\subseteq Y$, a homeomorphism $\phi\colon Y_0\to X$, a homogeneous functional $\mu\colon C(X)\to\mathbb{C}$ and a number $\lambda\in\mathbb{T}$ with $\lambda\neq -\mu(1_X)$ such that 
%$$
%\Delta(f)(y)=\lambda f(\phi(y))+\mu(f)
%$$
%for all $y\in Y_0$ and $f\in C(X)$.

The proof of Claim \ref{cl10} is the same as that of Step 10 of \cite{JimSad-20}.

\begin{cl}\label{cl10}
Assume $|X|\geq 3$. For each $x\in X$ and any $\{x_1,x_2\}\in X_2$ with $x_1\neq x\neq x_2$, there exists a unique point, depending only on $x$ and denoted by $\varphi(x)$, in the intersection $\Gamma(\{x,x_1\})\cap\Gamma(\{x,x_2\})$. Then the map $\varphi\colon X\to Y$ is injective and $\{\varphi(x_1),\varphi(x_2)\}=\Lambda_Y \left( \Gamma\left( (x_1,x_2) \right) \right)$ for all $\{x_1,x_2\}\in X_2$. 
\end{cl}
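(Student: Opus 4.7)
The plan is to show, for each $x\in X$, that the $2$-element sets $\Lambda_Y(\Gamma((x,x')))$ with $x'\in X\setminus\{x\}$ all share a common vertex in $Y$; this common vertex will be $\varphi(x)$.

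The first ingredient is immediate: for $\{x_1,x_2\}\in X_2$ with $x_1\neq x\neq x_2$, Claim \ref{cl9} applied to $\{x,x_1\}$ and $\{x,x_2\}$ (whose set-theoretic intersection is $\{x\}$) shows that $\Lambda_Y(\Gamma((x,x_1)))\cap\Lambda_Y(\Gamma((x,x_2)))$ is a singleton, giving a provisional candidate $\varphi_{x_1,x_2}(x)$. The substance of the claim is that this point does not depend on $\{x_1,x_2\}$. If $|X|=3$ there is only one such pair and nothing is to prove. If $|X|\geq 4$, choose $x_3\in X\setminus\{x,x_1,x_2\}$ and consider the three $2$-element sets $e_i:=\Lambda_Y(\Gamma((x,x_i)))$. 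Any two meet in exactly one point, so either they share a common vertex $v\in Y$ (in which case $v$ lies in every pairwise intersection and equals all three provisional $\varphi$'s), or they form a triangle $e_1=\{u,v\}$, $e_2=\{v,w\}$, $e_3=\{u,w\}$ with three distinct pairwise intersections.

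To rule out the triangle, apply the scalar identity recorded just after Claim \ref{cl5}: for every $f\in C(X)$ and every $i$,
$$
\Delta(f)(y_1)-\Delta(f)(y_2)=\beta(x,x_i)\,(f(x)-f(x_i)),
$$
where $(y_1,y_2)=\Gamma((x,x_i))$ and $\beta(x,x_i)\in\mathbb{T}^+$. Computing $\Delta(f)(u)-\Delta(f)(w)$ in two ways---once through $v$ by combining the identities for $e_1$ and $e_2$, and once directly through $e_3$---produces, with suitable signs $\sigma_i\in\{-1,+1\}$ from the orderings,
$$
\sigma_1\beta(x,x_1)(f(x)-f(x_1))+\sigma_2\beta(x,x_2)(f(x)-f(x_2))=\sigma_3\beta(x,x_3)(f(x)-f(x_3))
$$
for every $f\in C(X)$. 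Testing against an Urysohn function equal to $1$ at $x_1$ and vanishing at $x,x_2,x_3$ forces $\beta(x,x_1)=0$, contradicting $|\beta(x,x_1)|=1$. Hence the triangle is impossible, the three edges share a common vertex, and comparing over all pairs (handling overlapping pairs by a single triple argument and the disjoint case, which can arise only when $|X|\geq 5$, by chaining through a bridging pair) yields a well-defined $\varphi(x)\in Y$ with $\varphi(x)\in\Lambda_Y(\Gamma((x,x')))$ for every $x'\neq x$.

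For injectivity, suppose $\varphi(x_1)=\varphi(x_2)$ with $x_1\neq x_2$, and choose $x_3\in X\setminus\{x_1,x_2\}$. Then $\varphi(x_1)=\varphi(x_2)$ lies in $\Lambda_Y(\Gamma((x_1,x_3)))\cap\Lambda_Y(\Gamma((x_2,x_3)))$, a singleton by Claim \ref{cl9} that also contains $\varphi(x_3)$, so $\varphi(x_1)=\varphi(x_3)$. Iterating, $\varphi$ would be constant on $X$; when $|X|\geq 4$ this violates Claim \ref{cl9} on two disjoint pairs, while when $|X|=3$ the constant value $v$ together with one extra element from each $\Lambda_Y(\Gamma((x_i,x_j)))$ yield four distinct elements of $Y$, contradicting $|Y|=|X|=3$ from Claim \ref{cl3}. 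Hence $\varphi$ is injective, and since $\varphi(x_1)\neq\varphi(x_2)$ are both elements of the $2$-element set $\Lambda_Y(\Gamma((x_1,x_2)))$, the equality $\{\varphi(x_1),\varphi(x_2)\}=\Lambda_Y(\Gamma((x_1,x_2)))$ follows. The genuine obstacle is excluding the triangle configuration at $x$: the information in Claim \ref{cl9} alone admits octahedron-type arrangements that would break the well-definedness of $\varphi$, so the unimodularity of $\beta(x,x_i)$ from Theorem \ref{IsoLipWeaver} is indispensable.
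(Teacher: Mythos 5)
Your argument is correct and is essentially the standard proof of this step (the one the paper defers to Step 10 of \cite{JimSad-20}): you reduce the well-definedness of $\varphi(x)$ to showing that the edges $\Lambda_Y\left(\Gamma\left((x,x')\right)\right)$ emanating from a fixed $x$ share a common vertex, and you exclude the only alternative configuration (the triangle) by combining the $\beta$-identities recorded after Claim \ref{cl5} and testing against an Urysohn function that is $1$ at $x_1$ and vanishes on $\{x,x_2,x_3\}$. The remaining points --- independence of the choice of pair via chaining, injectivity of $\varphi$ (with the $|X|=3$ case settled by $|Y|=|X|$ from Claim \ref{cl3} and the $|X|\geq 4$ case by Claim \ref{cl9} on disjoint pairs), and the identification $\{\varphi(x_1),\varphi(x_2)\}=\Lambda_Y\left(\Gamma\left((x_1,x_2)\right)\right)$ --- are all handled correctly.
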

Let $Y_0=\varphi(X)$. Since the map $\varphi\colon X\to Y$ is injective, its inverse $\phi_0 \colon Y_0 \to X$ is a bijection which satisfies  
$$
\{y_1,y_2\}=\Lambda_Y\left(\Gamma\left( (\phi_0(y_1),\phi_0(y_2) \right) \right)\qquad (\{y_1,y_2\}\in (Y_0)_2).
$$
%%%%%%%%  I hiddened this claim %%%%%%%%%%%%%%
%\begin{cl}\label{cl11}
%There exist a nonempty subset $Y_0\subseteq Y$ and a bijection $\phi_0\colon Y_0\to X$ such that 
%$$
%\{y_1,y_2\}=\Gamma(\{\phi_0(y_1),\phi_0(y_2)\})\qquad (\{y_1,y_2\}\in (Y_0)_2).
%$$
%\end{cl}
%%%%%%%%%%%%%%%%%%%%%%%%%%%%%%%%%

Now the same argument as in Step 12 of \cite{JimSad-20} yields the next claim. 

\begin{cl}\label{cl12}
There exists a number $\lambda\in\mathbb{T}$ such that
$$
\Delta(f)(y_1)-\Delta(f)(y_2)=\lambda\left(f(\phi_0(y_1))-f(\phi_0(y_2))\right)\qquad \left(f\in C(X),\; y_1,y_2\in Y_0\right).                                            
$$
\end{cl}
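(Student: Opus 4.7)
The plan is to construct, for each ordered pair of distinct points in $Y_0$, a single scalar $\lambda(y_1, y_2) \in \mathbb{T}$ that witnesses the required identity, and then to use a cocycle-type argument over triples of points to force $\lambda$ to be constant.

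First, I would establish the pointwise existence of $\lambda(y_1, y_2)$. Fix distinct $y_1, y_2 \in Y_0$ and set $x_i = \phi_0(y_i)$. By Claim~\ref{cl10}, $\Lambda_Y(\Gamma((x_1, x_2))) = \{\varphi(x_1), \varphi(x_2)\} = \{y_1, y_2\}$, so $\Gamma((x_1, x_2))$ equals either $(y_1, y_2)$ or $(y_2, y_1)$. In either case the construction of $\Gamma$ and $\beta$ (together with the antisymmetry $\beta(x_2, x_1) = \beta(x_1, x_2)$ and the ordering established after Claim~\ref{cl5}) furnishes a scalar $\lambda(y_1, y_2) = \pm\beta(x_1, x_2) \in \mathbb{T}$, the sign recording the orientation of $\Gamma$, such that
$$\Delta(f)(y_1) - \Delta(f)(y_2) = \lambda(y_1, y_2)\bigl(f(\phi_0(y_1)) - f(\phi_0(y_2))\bigr) \qquad (f \in C(X)).$$

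Next, I would force the constancy of $\lambda$. Since $|Y_0| = |X| \geq 3$ by the standing hypothesis of Claim~\ref{cl10}, pick three pairwise distinct points $y_1, y_2, y_3 \in Y_0$ and write $x_i = \phi_0(y_i)$. Adding the above identity for the pairs $(y_1, y_2)$ and $(y_2, y_3)$ and comparing with the identity for $(y_1, y_3)$ gives, for every $f \in C(X)$,
$$\lambda(y_1, y_3)\bigl(f(x_1) - f(x_3)\bigr) = \lambda(y_1, y_2)\bigl(f(x_1) - f(x_2)\bigr) + \lambda(y_2, y_3)\bigl(f(x_2) - f(x_3)\bigr).$$
Three applications of Urysohn's lemma produce continuous functions realizing any prescribed triple of values at $x_1, x_2, x_3$. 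Plugging $(f(x_1), f(x_2), f(x_3)) = (1, 0, 0)$ yields $\lambda(y_1, y_3) = \lambda(y_1, y_2)$, and $(0, 0, 1)$ yields $\lambda(y_1, y_3) = \lambda(y_2, y_3)$. Thus all three $\lambda$'s agree on any triple, and since any two ordered pairs in $Y_0$ may be linked through a common point (or, for disjoint pairs, through an intermediate triple), there is a single $\lambda \in \mathbb{T}$ that works universally.

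The main subtlety lies in Step 1, namely carefully tracking the orientation of $\Gamma$ so that $\lambda(y_1, y_2)$ is unambiguously assigned on ordered pairs with the correct sign; once this bookkeeping is done, the cocycle identity reduces the constancy of $\lambda$ to an elementary linear calculation in three complex variables.
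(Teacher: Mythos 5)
Your proof is correct and follows essentially the route the paper intends: the paper omits this argument, deferring to Step 12 of \cite{JimSad-20}, and your reconstruction --- reading off $\lambda(y_1,y_2)=\pm\beta(\phi_0(y_1),\phi_0(y_2))$ from the discussion after Claim \ref{cl5} together with Claim \ref{cl10}, then forcing constancy via the telescoping identity over triples and Urysohn functions prescribing values at three points --- is the standard argument for that step. No gaps.
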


Using the above claim we can define a functional $\mu\colon C(X)\to\mathbb{C}$ by 
$$
\mu(f)=\Delta(f)(y_0)-\lambda f(\phi_0(y_0)) \qquad (f\in C(X)),
$$
where $y_0$ is an arbitrary point of $Y_0$. Then it is obvious that $\mu$ is well-defined and homogeneous and, moreover,  
\begin{equation} \label{Des-Delta}
\Delta(f)(y)=\lambda f(\phi_0(y))+\mu(f) \qquad (f\in C(X),\; y\in Y_0).
\end{equation}

Note that $\Delta(1_X)$ is a nonzero constant function by Claim \ref{cl2}. Hence it follows from \eqref{Des-Delta} that $\mu(1_X)\neq -\lambda$. 

The proof of Step 15 of \cite{JimSad-20} can be applied to get the next claim.   
\begin{cl}\label{cl15}
$\phi_0\colon Y_0\to X$ is a homeomorphism.
\end{cl}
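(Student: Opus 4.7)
I will prove continuity of $\phi_0$ and of its inverse $\varphi=\phi_0^{-1}$ separately. Continuity of $\phi_0$ is almost immediate from \eqref{Des-Delta}: for each $f\in C(X)$,
$$
\lambda(f\circ\phi_0) = \Delta(f)|_{Y_0} - \mu(f)\, 1_{Y_0}
$$
is the restriction to $Y_0$ of the continuous function $\Delta(f)\in C(Y)$, so $f\circ\phi_0\in C(Y_0)$ for every $f\in C(X)$. Since the compact Hausdorff topology on $X$ coincides with the initial topology induced by $C(X)$, this forces $\phi_0$ to be continuous.

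For continuity of $\varphi$, I exploit first countability of $X$ and compactness of $Y$. Fix $x\in X$ and a sequence $x_n\to x$ in $X$; it suffices to show that every cluster point $y^*$ of $(\varphi(x_n))$ equals $\varphi(x)$, so let $\varphi(x_{n_k})\to y^*$ in $Y$. For any $f\in C(X)$, continuity of $f$ combined with \eqref{Des-Delta} yields
$$
\Delta(f)(\varphi(x_{n_k})) = \lambda f(x_{n_k}) + \mu(f) \to \lambda f(x) + \mu(f) = \Delta(f)(\varphi(x)),
$$
while continuity of $\Delta(f)\in C(Y)$ gives $\Delta(f)(\varphi(x_{n_k}))\to\Delta(f)(y^*)$. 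Hence $\Delta(f)(y^*)=\Delta(f)(\varphi(x))$ for every $f\in C(X)$. Suppose $y^*\neq\varphi(x)$. Using $|X|\geq 3$ (the standing hypothesis from Claim \ref{cl10}), choose $x'\in X\setminus\{x\}$ with $\varphi(x')\neq y^*$. Subtracting \eqref{Des-Delta} at $\varphi(x')$ from the identity $\Delta(f)(y^*)=\Delta(f)(\varphi(x))=\lambda f(x)+\mu(f)$ gives
$$
\Delta(f)(y^*)-\Delta(f)(\varphi(x')) = \lambda(f(x)-f(x')) \qquad (f\in C(X)),
$$
so $((y^*,\varphi(x')),\lambda)\in\mathcal{B}_{(x,x')}$; likewise $((\varphi(x),\varphi(x')),\lambda)\in\mathcal{B}_{(x,x')}$ by \eqref{Des-Delta}. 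Claim \ref{cl5} describes $\mathcal{B}_{(x,x')}$ as a two-element set whose members have opposite scalars, so the element with scalar $\lambda$ is unique; hence these two triples coincide, forcing $y^*=\varphi(x)$, a contradiction. Thus $\varphi$ is sequentially continuous on the first countable space $X$, and therefore continuous.

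The subtle part is the final contradiction: ruling out a spurious cluster point $y^*\neq\varphi(x)$ relies on the structural rigidity of $\mathcal{B}_{(x,x')}$ from Claim \ref{cl5} together with the freedom to choose an auxiliary point $x'$ granted by $|X|\geq 3$. Once continuity of both $\phi_0$ and $\varphi$ is in place, $\phi_0\colon Y_0\to X$ is a continuous bijection between Hausdorff spaces with continuous inverse, hence a homeomorphism.
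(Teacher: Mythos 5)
Your proof is correct. The paper itself does not reproduce an argument for this claim (it defers to Step 15 of \cite{JimSad-20}), but your route is the natural one and surely close to the cited proof: continuity of $\phi_0$ via the initial topology induced by $C(X)$, and continuity of $\varphi=\phi_0^{-1}$ by showing every cluster point $y^*$ of $(\varphi(x_n))$ satisfies $\Delta(f)(y^*)=\Delta(f)(\varphi(x))$ for all $f$ and then using the two-element description of $\mathcal{B}_{(x,x')}$ from Claim \ref{cl5} (with an auxiliary $x'$, available since $|X|\geq 3$) to force $y^*=\varphi(x)$. Two minor remarks: passing from a cluster point to a convergent subsequence uses first countability of $Y$ (which is among the hypotheses, though you only invoke its compactness), and your separate proof that $\phi_0$ is continuous is redundant once $\varphi$ is known to be a continuous injection from the compact space $X$ into the Hausdorff space $Y$.
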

In the next claims we shall show that the homeomorphism $\phi_0\colon Y_0 \to X$ can be extended to a homeomorphism $\phi \colon Y \to X$ satisfying $\Delta(f)(y)=\lambda f(\phi(y))+\mu(f)$ for all $f\in C(X)$ and $y\in Y$. To do this we first prove the next claim.  

\begin{cl}\label{cl16}
The map $S\colon C(X)\to C(Y)$ defined by 
$$
S(f)(y)=\lambda^{-1}(\Delta(f)(y)-\mu(f))\quad (f\in C(X),\; y\in Y)
$$
is a unital algebra homomorphism.  
\end{cl}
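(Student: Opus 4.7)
The three properties to check are unitality, linearity, and multiplicativity. Unitality $S(1_X)=1_Y$ follows from Claim \ref{cl2}: $\rho(\Delta(1_X))=\rho(1_X)=0$ forces $\Delta(1_X)$ to be a constant function, and evaluating \eqref{Des-Delta} at any $y_0\in Y_0$ identifies that constant as $\lambda+\mu(1_X)$, whence $S(1_X)\equiv 1_Y$. Homogeneity is equally direct: $\mu$ is itself homogeneous since $\mu(\alpha f)=\Delta(\alpha f)(y_0)-\lambda(\alpha f)(\phi_0(y_0))=\alpha\mu(f)$ by homogeneity of $\Delta$ (Claim \ref{cl2}), and this combined with homogeneity of $\Delta$ gives $S(\alpha f)=\alpha S(f)$.

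The decisive observation is that on $Y_0$, formula \eqref{Des-Delta} yields $S(f)(y)=f(\phi_0(y))$, so the restriction of $S$ to $Y_0$ coincides with the composition operator associated to $\phi_0$; hence the algebraic identities $S(f+g)=S(f)+S(g)$ and $S(fg)=S(f)S(g)$ hold automatically on $Y_0$. The real work is to propagate these identities to all of $Y$. For each $y\in Y$ and each pair $(f,g)\in C(X)^2$, I would invoke the 2-local linear bijection $\Delta_{f,g}$ of Claim \ref{cl3} with its Gy\H{o}ry--Moln\'{a}r decomposition $\Delta_{f,g}(h)(y)=\lambda_{f,g}\,h(\phi_{f,g}(y))+\mu_{f,g}(h)$. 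Applied to pairs of the form $(f+g,f)$ and $(fg,f)$, and matched with \eqref{Des-Delta} on $Y_0$ via the Urysohn functions $h_{(x_1,x_2)}$ (whose extrema are attained only at the two prescribed points, thanks to first countability), this structure should pin down the relevant parameters sufficiently to force $S(f+g)(y)=S(f)(y)+S(g)(y)$ and $S(fg)(y)=S(f)(y)S(g)(y)$ at each $y\in Y$.

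\textbf{Main obstacle.} The technical heart of the argument is the identification of the pair-dependent parameters $(\lambda_{f,g},\phi_{f,g},\mu_{f,g})$ with the fixed data $(\lambda,\phi_0,\mu)$ after restriction to $Y_0$. The 2-local representation only gives two ``anchor'' functions per pair, and passing to an arbitrary $y\notin Y_0$ requires combining many such representations. This is where the standing hypothesis $|X|\geq 3$ enters: it makes three distinct points of $Y_0$ available, whose pairwise Urysohn separation (sharpened by first countability) is rigid enough to rule out spurious sign/permutation ambiguities in the parameters and to transfer the multiplicative structure off $Y_0$.
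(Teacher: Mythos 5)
There is a genuine gap at exactly the point you flag as the ``main obstacle'': your plan names the difficulty but supplies no mechanism for overcoming it, and the mechanism you sketch cannot work as stated. The fundamental problem is that additivity of $S$ at a point $y\notin Y_0$ involves \emph{three} functions $f$, $g$, $f+g$, while the 2-local hypothesis only ever yields a common representation $(\lambda_{\cdot,\cdot},\phi_{\cdot,\cdot},\mu_{\cdot,\cdot})$ for \emph{two} functions at a time. Applying Claim \ref{cl3} to the pairs $(f+g,f)$ and $(fg,f)$, as you propose, gives two representations with \emph{different} homeomorphisms and functionals, and there is no way to match them: even if each pair-representation lets you write $S_y(f+g)=(f+g)(x_1)$, $S_y(f)=f(x_1)$ for one point $x_1$ and $S_y(f+g)=(f+g)(x_2)$, $S_y(g)=g(x_2)$ for another, you cannot conclude $S_y(f+g)=S_y(f)+S_y(g)$ because $x_1\neq x_2$ in general. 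Moreover, $\mu_{f,g}$ is an arbitrary linear functional on $C(X)$, not determined by values at finitely many points, so Urysohn peak functions cannot ``pin down'' that parameter. Also, the hypothesis $|X|\geq 3$ plays no role here; it is used earlier, in Claim \ref{cl10}.

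The paper resolves this with two ideas absent from your proposal. First, it fixes $y$ and invokes the Kowalski--S{\l}odkowski theorem \cite{KowSlo-80}: a functional $S_y$ with $S_y(0_X)=0$ is automatically linear and multiplicative provided only that $S_y(f)-S_y(g)\in (f-g)(X)$ for every \emph{pair} $f,g$. This converts the linearity-plus-multiplicativity goal into a genuinely pairwise condition, which is the only kind of condition 2-locality can hope to verify. Second, to establish that pairwise condition, the paper runs an orbit argument: choosing $y_0\in Y_0$ with $\phi_0(y_0)=\phi_{f,g}(y)$, it builds a sequence $\{y_i\}\subseteq Y_0$ with $\phi_0(y_{i+1})=\phi_{f,g}(y_i)$, extracts a convergent subsequence $y_i\to z_0$ (here first countability and compactness are used), and iterates the identity $f(\phi_0(z_0))-f(\phi_0(y_i))=(\lambda^{-1}\lambda_{f,g})^n\bigl(f(\phi_0(z_0))-f(\phi_0(y_{i+n}))\bigr)$ to force $f\circ\phi_0$ and $g\circ\phi_0$ to be constant along the orbit; this yields $S_y(f)=f(\phi_0(y_0))$ and $S_y(g)=g(\phi_0(y_0))$ with the \emph{same} $y_0$, whence $S_y(f)-S_y(g)\in(f-g)(X)$. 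Your observations on unitality, homogeneity, and the behaviour of $S$ on $Y_0$ are correct but are the easy part; without the Kowalski--S{\l}odkowski reduction and the orbit construction the proof does not go through.
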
 

Fix a point $y\in Y$ and define the functional $S_y\colon C(X)\to\mathbb{C}$ by 
$$
S_y(f)=\lambda^{-1}(\Delta(f)(y)-\mu(f))\qquad (f\in C(X)).
$$
Since $\Delta(1_X)$ is a constant function it follows from the equality \eqref{Des-Delta} that $S_y(1_X)=1$. We next prove that $S_y$ is linear and multiplicative. Since $S_y(0_X)=0$, by the Kowalski--S{\l}odkowski theorem \cite{KowSlo-80} it suffices to show that $S_y(f)-S_y(g)\in (f-g)(X)$ for every $f,g\in C(X)$. Let $f,g\in C(X)$. Since $\phi_0\colon Y_0 \to X$ is a bijective map, there exists $y_0\in Y_0$ such that $\phi_0(y_0)=\phi_{f,g}(y)$. Construct the sequence $\{y_i\}_{i=0 }^\infty$ in $Y_0$ such that  
$$
\phi_0(y_{i+1})=\phi_{f,g}(y_{i})\qquad (i\in\mathbb{N}\cup\{0\}).
$$
Since $Y_0$ is a first countable compact Hausdorff space, passing through a subsequence we may assume that $\{y_i\}_i\to z_0$ for some $z_0\in Y_0$. Hence, tending $i\to\infty$ in the above equality, we get $\phi_0(z_0)=\phi_{f,g}(z_0)$. Since $z_0,y_i\in Y_0$, Claim \ref{cl12} provides the equations:
$$
\Delta(f)(z_0)-\Delta(f)(y_i)=\lambda(f(\phi_0(z_0))-f(\phi_0(y_i)))
$$
and 
$$
\Delta(g)(z_0)-\Delta(g)(y_i)=\lambda(g(\phi_0(z_0))-g(\phi_0(y_i))).
$$
On the other hand, since $\phi_{f,g}(z_0)=\phi_0(z_0)$ and $\phi_{f,g}(y_i)=\phi_0(y_{i+1})$, we have 
\begin{align*}
\Delta(f)(z_0)-\Delta(f)(y_i)&=\lambda_{f,g}(f(\phi_{f,g}(z_0))-f(\phi_{f,g}(y_i))) \\
                             &=\lambda_{f,g}(f(\phi_0(z_0))-f(\phi_0(y_{i+1})))
\end{align*}
and 
\begin{align*}
\Delta(g)(z_0)-\Delta(g)(y_i)&=\lambda_{f,g}(g(\phi_{f,g}(z_0))-g(\phi_{f,g}(y_i)))\\
                             &=\lambda_{f,g}(g(\phi_0(z_0))-g(\phi_0(y_{i+1}))).
\end{align*}
Hence, using the cited equations above, for each $i\in\mathbb{N}\cup \{0\}$ we have 
$$
f(\phi_0(z_0))-f(\phi_0(y_i))=\lambda^{-1}\lambda_{f,g}(f(\phi_0(z_0))-f(\phi_0(y_{i+1})))
$$
and 
$$
g(\phi_0(z_0))-g(\phi_0(y_i))=\lambda^{-1}\lambda_{f,g}(g(\phi_0(z_0))-g(\phi_0(y_{i+1}))).
$$
Now, it follows by induction that for each $i\in \mathbb{N}\cup \{0\}$ and $n\in \mathbb{N}$, we have 
$$
f(\phi_0(z_0))-f(\phi_0(y_i))=(\lambda^{-1}\lambda_{f,g})^n(f(\phi_0(z_0))-f(\phi_0(y_{i+n}))),
$$
and
$$
g(\phi_0(z_0))-g(\phi_0(y_i))=(\lambda^{-1}\lambda_{f,g})^n(g(\phi_0(z_0))-g(\phi_0(y_{i+n}))),
$$
Thus tending $n\to \infty$ above, we get 
$$
f(\phi_0(z_0))=f(\phi_0(y_i))\qquad (i\in \mathbb{N}\cup \{0\}). 
$$
and 
$$
g(\phi_0(z_0))=g(\phi_0(y_i))\qquad (i\in \mathbb{N}\cup \{0\}). 
$$
Therefore, for each $i\in \mathbb{N}\cup \{0\}$, we infer from the equations that  
$$
\Delta(f)(z_0)-\Delta(f)(y_i)=\lambda(f(\phi_0(z_0))-f(\phi_0(y_i)))=0,
$$
and 
$$
\Delta(g)(z_0)-\Delta(g)(y_i)=\lambda(g(\phi_0(z_0))-g(\phi_0(y_i)))=0,
$$
that is,   
$$
\Delta(f)(z_0)=\Delta(f)(y_i), \quad  \Delta(g)(z_0)=\Delta(g)(y_i) \qquad (i\in \mathbb{N}\cup \{0\}),
$$
and taking limits with $i\to\infty$ above, we deduce that  
$$
\Delta(f)(z_0)=\Delta(f)(y_0),\qquad \Delta(g)(z_0)=\Delta(g)(y_0).
$$
On the other hand, notice that $f(\phi_{f,g}(y))=f(\phi_0(y_0))=f(\phi_0(z_0))$, and consequently   
\begin{align*}
\Delta(f)(y) &=\lambda_{f,g} f(\phi_{f,g}(y))+ \mu_{f,g}(f) \\ 
        &=\lambda_{f,g} f(\phi_0(z_0))+ \mu_{f,g}(f) \\
        &=\lambda_{f,g} f(\phi_{f,g}(z_0))+ \mu_{f,g}(f)\\
        &=\Delta(f)(z_0).
\end{align*}   
Therefore we have 
$$
\Delta(f)(y)=\Delta(f)(z_0)=\Delta(f)(y_0),
$$ 
and, similarly, we can obtain 
$$
\Delta(g)(y)=\Delta(g)(z_0)=\Delta(g)(y_0).
$$
Now, using the equality \eqref{Des-Delta} and the definition of $S_y$, we can write  
\begin{align*}
\Delta(f)(y_0)&=\lambda f(\phi_0(y_0))+\mu(f)=\lambda f(\phi_0(y_0))+\Delta(f)(y)-\lambda S_y(f),\\
\Delta(g)(y_0)&=\lambda g(\phi_0(y_0))+\mu(g)=\lambda g(\phi_0(y_0))+\Delta(g)(y)-\lambda S_y(g),
\end{align*}
which imply 
\begin{align*}
S_y(f)&=f(\phi_0(y_0))+\lambda^{-1}\left(\Delta(f)(y)-\Delta(f)(y_0)\right)=f(\phi_0(y_0)),\\
S_y(g)&=g(\phi_0(y_0))+\lambda^{-1}\left(\Delta(g)(y)-\Delta(g)(y_0)\right)=g(\phi_0(y_0)).
\end{align*}
Finally, we deduce the required condition: 
$$
S_y(f)-S_y(g)=f(\phi_0(y_0))-g(\phi_0(y_0))\in(f-g)(X).
$$
Hence $S_y$ is a unital multiplicative linear functional on $C(X)$. Since $y$ was arbitrary, we conclude that $S\colon C(X)\to C(Y)$ is a unital algebra homomorphism. 

\begin{cl}\label{cl17}
There exists a homeomorphism $\phi\colon Y\to X$ such that 
$$
\Delta(f)=\lambda f\circ\phi+\mu(f)1_Y\qquad (f\in C(X)).
$$
\end{cl}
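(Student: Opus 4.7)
My plan is to construct $\phi$ via Gelfand-style reasoning from Claim \ref{cl16}, then check it is a homeomorphism that extends $\phi_0$. For each $y\in Y$, Claim \ref{cl16} tells us that $S_y\colon f\mapsto \lambda^{-1}(\Delta(f)(y)-\mu(f))$ is a unital multiplicative linear functional on $C(X)$, so it equals $\delta_{\phi(y)}$ for a unique $\phi(y)\in X$. This yields a map $\phi\colon Y\to X$ with $S(f)=f\circ\phi$, hence
\[
\Delta(f)(y)=\lambda f(\phi(y))+\mu(f) \qquad (f\in C(X),\ y\in Y).
\]
Continuity of $\phi$ is immediate because $f\circ\phi=S(f)\in C(Y)$ for every $f\in C(X)$ and the original topology of the compact Hausdorff space $X$ coincides with the initial topology induced by $C(X)$.

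Comparing this representation with \eqref{Des-Delta} gives $f(\phi(y))=f(\phi_0(y))$ for every $f\in C(X)$ and $y\in Y_0$, so $\phi|_{Y_0}=\phi_0$. Consequently $\phi(Y)\supseteq\phi_0(Y_0)=X$, and $\phi$ is surjective.

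The main obstacle is the injectivity of $\phi$. To handle it, I would extract from the proof of Claim \ref{cl16} the identity $f(\phi(y))=f(\phi_{f,g}(y))$ valid for every $y\in Y$ and every $f,g\in C(X)$; this is obtained by combining the formula $S_y(f)=f(\phi_0(y_0))$ derived there, where $\phi_0(y_0)=\phi_{f,g}(y)$, with $S_y(f)=f(\phi(y))$. Assume for contradiction that $y_1\neq y_2$ and $\phi(y_1)=\phi(y_2)=:x_0$. Pick any $x\in X\setminus\{x_0\}$ and, invoking the first countability of $X$, a Urysohn function $f=h_{(x_0,x)}$ satisfying $f^{-1}(\{1\})=\{x_0\}$. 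For any $g\in C(X)$, the identity then yields $f(\phi_{f,g}(y_i))=f(\phi(y_i))=f(x_0)=1$ for $i=1,2$, forcing $\phi_{f,g}(y_1)=x_0=\phi_{f,g}(y_2)$; this contradicts injectivity of the homeomorphism $\phi_{f,g}\colon Y\to X$. Thus $\phi$ is injective, and being a continuous bijection between compact Hausdorff spaces, it is a homeomorphism, which establishes the desired representation.
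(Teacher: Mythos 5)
Your proof is correct and follows essentially the same route as the paper: the continuous map $\phi$ is obtained from the unital algebra homomorphism $S$ of Claim \ref{cl16} via Gelfand theory, and the representation of $\Delta$ then follows immediately. The paper delegates the verification that $\phi$ is a homeomorphism to Step 17 of \cite{JimSad-20}, whereas you supply a self-contained and sound argument (surjectivity from $\phi|_{Y_0}=\phi_0$, injectivity from the identity $f(\phi(y))=f(\phi_{f,g}(y))$ combined with the peak function $h_{(x_0,x)}$ and the injectivity of $\phi_{f,g}$).
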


Let $S\colon C(X) \to C(Y)$ be the unital algebra homomorphism given in Claim \ref{cl16}. By Gelfand theory, $S$ induces a continuous map $\phi\colon Y\to X$ such that $S(f)=f\circ\phi$ for all $f\in C(X)$, and thus $\Delta(f)=\lambda f\circ\phi+\mu(f)1_Y$ for all $f\in C(X)$. Now, a similar proof to that of Step 17 in \cite{JimSad-20} shows that $\phi$ is a homeomorphism from $Y$ onto $X$.% (that is $S$ is an isomorphism from $C(X)$ onto $C(Y)$).  

We note that $\phi(y)=\phi_0(y)$ for all $y\in Y_0$, since by Claim \ref{cl17} and the equation \eqref{Des-Delta} we have $f(\phi(y))=f(\phi_0(y))$ for all $f\in C(X)$ and $y\in Y_0$. 
\begin{cl}\label{cl18}
For each $f\in C(X)$, we have $T(\pi_X(f))=\pi_Y(\lambda f\circ\phi)$. In particular, $T$ is linear and surjective. 
\end{cl}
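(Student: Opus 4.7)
The plan is to read off the claim from Claim \ref{cl17} together with the definition of $\Delta$. First, I would recall that by the definition of $\Delta$ in Claim \ref{cl1} and of $\Psi_Y$, we have $\Psi_Y(\Delta(f))=(T(\pi_X(f)), f(u_0))$; unravelling the first coordinate yields the identity $\pi_Y(\Delta(f)) = T(\pi_X(f))$ valid for every $f \in C(X)$.

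Next, I would substitute the formula from Claim \ref{cl17}, namely $\Delta(f) = \lambda f \circ \phi + \mu(f) 1_Y$, and apply $\pi_Y$ to both sides. Since the constant function $\mu(f) 1_Y$ lies in $\ker(\rho)$, it vanishes in the quotient, giving
$$T(\pi_X(f)) = \pi_Y(\lambda f \circ \phi) \qquad (f \in C(X)),$$
which is the formula asserted in the claim.

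From here, linearity of $T$ follows by a direct computation: for $a, b \in C_\rho(X)$ and $\alpha, \beta \in \mathbb{C}$, write $a = \pi_X(f)$ and $b = \pi_X(g)$ using surjectivity of $\pi_X$, and then
$$T(\alpha a + \beta b) = T(\pi_X(\alpha f + \beta g)) = \pi_Y(\lambda(\alpha f + \beta g) \circ \phi) = \alpha T(a) + \beta T(b),$$
since each of $\pi_X, \pi_Y$, composition with $\phi$, and scalar multiplication is linear. For surjectivity, given any element $\pi_Y(g) \in C_\rho(Y)$, the class $\pi_X(\lambda^{-1} g \circ \phi^{-1})$ provides a preimage, because $\phi$ is a homeomorphism from $Y$ onto $X$ by Claim \ref{cl17} and $\lambda \in \mathbb{T}$.

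There is no genuine obstacle in this final claim, since all the structural work was carried out in Claims \ref{cl16} and \ref{cl17}; the only thing to be careful about is to keep $\Delta$ (acting between $C(X)$ and $C(Y)$) distinct from $T$ (acting between the quotient spaces $C_\rho(X)$ and $C_\rho(Y)$), and to verify that the additive constant $\mu(f) 1_Y$ appearing in $\Delta$ is exactly what is killed when one passes to the quotient.
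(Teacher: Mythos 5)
Your proposal is correct and follows essentially the same route as the paper: apply $\Psi_Y$ to the identity $\Delta(f)=\Psi_Y^{-1}(T(\pi_X(f)),f(u_0))$, substitute the representation $\Delta(f)=\lambda f\circ\phi+\mu(f)1_Y$ from Claim \ref{cl17}, and observe that the constant $\mu(f)1_Y$ vanishes under $\pi_Y$. Your explicit verification of linearity and surjectivity is a harmless elaboration of what the paper leaves as an immediate consequence.
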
 

Let $f\in C(X)$. By Claim \ref{cl17} and the definition of $\Delta$, we have 
$$
\lambda f\circ\phi+\mu(f)1_Y=\Delta(f)=\Psi_Y^{-1}(T(\pi_X(f)),f(u_0)).
$$
Hence $\Psi_Y(\lambda f\circ\phi+\mu(f)1_Y)=(T(\pi_X(f)),f(u_0))$ which implies  
$$
\pi_Y(\lambda f\circ\phi)=\pi_Y(\lambda f\circ\phi+\mu(f)1_Y)=T(\pi_X(f)).
$$ 
This completes the proof of Theorem \ref{main2}. 
\end{proof}

%Note: without considering the first and the last claims, the other claims with the exactly the same proofs provide the description of a 2-local diameter preserving map. Now using the first and the last claim we conclude the reflexivity of 2-local isometries. So indeed, we obtain 2-local reflexivity of isometries by proving the mentioned claims for 2-local diameter preserving. The description \Delta(f)=\lambda f\circ \phi+\mu(f) shows that the homogeneous map \Delta is additive up to constants, in the sense that for each f,g,Delta(f+g)-Delta(f)-\Delta(g)  is a constant function. It also  concludes that for each g\in C(Y) there exists f\in C(X) such that \Delta(f)-g is constant. 

\textbf{Acknowledgements.} Research partially supported by Junta de Andaluc\'{\i}a grant FQM194.


\begin{thebibliography}{99} 
\bibitem{AizRam-07} A. Aizpuru and F. Rambla, There's something about the diameter, J. Math. Anal. Appl. \textbf{330} (2007), 949--962.
\bibitem{AizRam-10} A. Aizpuru and F. Rambla, Diameter preserving bijections and $C_0(L)$ spaces, Bull. Belg. Math. Soc. Simons \textbf{17} (2010), 377--383. 
\bibitem{AizTam-07} A. Aizpuru and M. Tamayo, Linear bijections which preserve the diameter of vector-valued maps, Linear Algebra Appl. \textbf{424} (2007), 371--377.
%\bibitem{AizTam-08} A. Aizpuru and M. Tamayo, On diameter preserving linear maps, J. Korean Math. Soc. \textbf{45} (2008), 197--204.
\bibitem{BarRoy-02} B. A. Barnes and A. K. Roy, Diameter preserving maps on various classes of function spaces, Studia Math. \textbf{153} (2002), 127--145.
\bibitem{Cab-99} F. Cabello S\'{a}nchez, Diameter preserving linear maps and isometries, Arch. Math. (Basel) \textbf{73} (1999), 373--379.
%\bibitem{Cab-00} F. Cabello S\'anchez, Diameter preserving linear maps and isometries, II, Proc. Indian Acad. Sci (Math. Sci.) \textbf{110}, no. 2 (2000), 205--211.
\bibitem{CabMol-02} F. Cabello S\'{a}nchez and L. Moln\'ar, Reflexivity of the isometry group of some classical spaces, Rev. Mat. Iberoamericana, \textbf{18}, no. 2 (2002), 409--430.
%\bibitem{FonHos-11} J. J. Font and M. Hosseini, Diameter preserving mappings between function algebras, Taiwanese J. Math. \textbf{15} (2011), 1487--1495.
\bibitem{FonHos-17} J. J. Font and M. Hosseini, Diameter preserving maps on function spaces, Positivity \textbf{21} (2017), no. 3, 875--883.
\bibitem{FonHos-19} J. J. Font and M. Hosseini (2019): Nonlinear diameter preserving maps on function spaces, Quaestiones Mathematicae, DOI: 10.2989/16073606.2018.1536896
%\bibitem{FonSan-02} J. J. Font and M. Sanch\'{i}s, A characterization of locally compact spaces with homeomorphic one-point compactifications, Topology Appl. \textbf{121} (2002), 91--104.
\bibitem{FonSan-04} J. J. Font and M. Sanch\'{i}s, Extreme points and the diameter norm, Rocky Mountain J. Math. \textbf{34} (2004), 1325--1331.
\bibitem{GonUsp-99} F. Gonz\'{a}lez and V. V. Uspenskij, On homomorphisms of groups of integer-valued functions, Extracta Math. \textbf{14} (1999), 19--29.
%\bibitem{Gyo-99} M. Gy\H{o}ry, Diameter preserving linear bijections of $C_0(X)$, Publ. Math. Debrecen \textbf{54 / 1-2} (1999), 207--215.
\bibitem{Gyo-01} M. Gy\H{o}ry, 2-local isometries of $C_0(X)$, Acta Sci. Math. (Szeged) \textbf{67} (2001), 735--746.
\bibitem{GyoMol-98} M. Gy\H{o}ry and L. Moln\'{a}r, Diameter preserving linear bijections of $C(X)$, Arch. Math. (Basel) \textbf{71} (1998), 301--310.
\bibitem{HatMiuOkaTak-07} O. Hatori, T. Miura, H. Oka and H. Takagi, 2-local isometries and 2-local automorphisms on uniform algebras, Int. Math. Forum \textbf{50} (2007), 2491--2502.
\bibitem{HatOi-18c} O. Hatori and S. Oi, 2-local isometries on functions spaces, Recent trends in operator theory and applications, 89--106, Contemp. Math., 737, Amer. Math. Soc., Providence, RI, 2019.
\bibitem{Hos-17} M. Hosseini, Generalized 2-local isometries of spaces of continuously differentiable functions, Quaest. Math. \textbf{40} (2017), 1003--1014.
\bibitem{JamSad-16} A. Jamshidi and F. Sady, Nonlinear diameter preserving maps between certain function spaces, Mediterr. J. Math. \textbf{13} (2016), 4237--4251.
\bibitem{JimSad-20} A. Jim{\'e}nez-Vargas and F. Sady, Algebraic reflexivity of diameter-preserving linear bijections between $C(X)$-spaces, arXiv:2004.05864.
\bibitem{JimVil-11} A. Jimenez-Vargas and M. Villegas-Vallecillos, 2-local isometries on spaces of Lipschitz functions, Canad. Math. Bull. \textbf{54} (2011), 680--692.
\bibitem{KowSlo-80} S. Kowalski and Z. S\l odkowski, A characterization of multiplicative linear functionals in Banach algebras, Studia Math. \textbf{67} (1980), pp. 215--223.
\bibitem{LiPerWanWan-19} L. Li, A. M. Peralta, L. Wang and Y.-S. Wang, Weak-2-local isometries on uniform algebras and Lipschitz algebras, Publ. Mat. \textbf{63} (2019), 241--264.
\bibitem{Mol-02} L. Moln{\'a}r, 2-local isometries of some operator algebras, Proc. Edinburgh Math. \textbf{45} (2002), 349--352.
%\bibitem{Mol-07} L. Moln\'ar, Selected preserver problems on algebraic structures of linear operators and on function spaces, Lecture Notes in Mathematics, 1895. Springer-Verlag, Berlin, 2007.
\bibitem{MolZal-99} L. Moln\'ar and B. Zalar, Reflexivity of the group of surjective isometries of some Banach spaces, Proc. Edin. Math. Soc. \textbf{42} (1999), 17--36.
\bibitem{RaoRoy-01} T. S. S. R. K. Rao and A. K. Roy, Diameter preserving linear bijections of function spaces, J. Aust. Math. Soc. \textbf{70} (2001), 323--335.
\end{thebibliography}
\end{document}